\newcommand*{\sheafhom}{\mathcal{H}\kern -.5pt om}
\theoremstyle{plain}
\newtheorem{theorem}{Theorem}
\newtheorem{proposition}{Proposition}
\newtheorem{lemma}{Lemma}
\theoremstyle{definition}
\newtheorem{conjecture}{Conjecture}
\newtheorem{remark}{Remark}
\newcommand\cyr{\fontencoding{OT2}\fontfamily{wncyr}\selectfont
   \language\fakelanguage}
\DeclareTextFontCommand{\textcyr}{\cyr}
\begin{document}

\title[Refined Dubrovin conjecture for the Lagrangian Grassmanian $LG(2,4)$]{Proof of the refined Dubrovin conjecture for the Lagrangian Grassmanian $LG(2,4)$}
\author{Fangze Sheng}
\address{School of Mathematical Sciences, University of Science and Technology of China, Hefei 230026, P. R. China}
\email{fzsheng2000@mail.ustc.edu.cn}

\begin{abstract} 
The Dubrovin conjecture predicts a relationship between the monodromy data of the Frobenius manifold associated to the quantum cohomology of a smooth projective variety and the bounded derived category of the same variety. A refinement of this conjecture was given by Cotti, Dubrovin and Guzzetti, which is equivalent to the Gamma conjecture II proposed by Galkin, Golyshev and Iritani. The Gamma conjecture II for quadrics was proved by Hu and Ke. The Lagrangian Grassmanian $LG(2,4)$ is isomorphic to the quadric in $\mathbb P^4$. In this paper, we give a new proof of the refined Dubrovin conjecture for the Lagrangian Grassmanian $LG(2,4)$ by explicit computation. 
\end{abstract}
\maketitle

\setcounter{tocdepth}{1}
\tableofcontents

\section{Introduction}

The monodromy data of a semisimple Dubrovin--Frobenius manifold $M$ \cite{Dubrovin2dtft,dubrovin1999painleve} consist of four constant matrices $(\mu, R, S, C)$. Here, $\mu$ and $R$ are given by monodromy data of the ODE system associated to {\it the extended Dubrovin connection} on $M \times \mathbb C^{*}$ at $z=0$, the matrix $S$, called the {\it Stokes matrix}, describes asymptotic behaviours of solutions to the same system as $|z| \rightarrow \infty$, and the matrix $C$, called the {\it central connection matrix}, relates two canonically-constructed analytic solutions. Dubrovin proved that one can reconstruct the Dubrovin--Frobenius structure of a semisimple Dubrovin--Frobenius manifold from its monodromy data $(\mu, R, S, C)$ \cite{Dubrovin2dtft,dubrovin1999painleve}. 

Let $X$ be a $D$-dimensional smooth projective variety with vanishing odd cohomology, and $X_{0,k,\beta}$ the moduli space of stable maps of degree $\beta\in H_2(X;\mathbb{Z})/{\rm torsion}$ with target~$X$ from curves of genus~$0$ with~$k$ distinct marked points.  Choose a homogeneous basis $\phi_1 = 1,\phi_2,\dots,\phi_n$ of the cohomology ring $H^{*}(X;\mathbb{C})$ such that $\phi_\alpha \in H^{2q_\alpha}(X;\mathbb C)$, $\alpha=1,\dots,n$,  $0=q_1<q_2 \leq \dots \leq q_{n-1} < q_n = D$. The {\it Gromov--Witten (GW) invariants of~$X$ of genus~$0$ and degree~$\beta$} are the integrals \cite{Behrend1997,kontsevich1994gromov,Dubrovin2dtft,li1998virtual,manin1999frobenius,ruan1995mathematical}
\begin{equation}
   \int_{[X_{0,k,\beta}]^{\rm virt}} {\rm ev}_1^*(\phi_{\alpha_1}) \cdots  {\rm ev}_k^*(\phi_{\alpha_k}), \quad 
\alpha_1,\dots,\alpha_k=1,\dots,n. 
\end{equation} 
Here, ${\rm ev}_a$, $a=1,\dots,k$, are the evaluation maps, and $[X_{0,k,\beta}]^{\rm virt}$ is the virtual fundamental class. The {\it genus~$0$ primary free energy $F_0({\bf v};Q)$} for the GW invariants of~$X$ is defined by
\begin{equation}
F_0({\bf v};Q) = \sum_{k\ge0}\sum_{\alpha_1,\dots,\alpha_k \ge0} \sum_{\beta}\frac{Q^\beta v^{\alpha_1} \cdots v^{\alpha_k} }{k!} 
\int_{[X_{0,k,\beta}]^{\rm virt}} {\rm ev}_1^*(\phi_{\alpha_1}) \cdots {\rm ev}_k^*(\phi_{\alpha_k}),
\end{equation}
where $(v^1,\dots,v^n)$ are indeterminates, and 
\begin{equation}
Q^\beta=Q_1^{m_1} \cdots Q_r^{m_r} \quad  ({\rm for}~\beta=m_1\beta_1+\dots+m_r\beta_r)
\end{equation}
is an element of the Novikov ring $\mathbb C [[Q_1,\dots,Q_r]]$. 
Here, $(\beta_1,\dots,\beta_r)$ is a basis of $H_2(X;\mathbb{Z})/{\rm torsion}.$ When $X$ is Fano, it is known that $F_0({\bf v};Q)$ is a power series of $v^1, Q_1 e^{v^2}, \dots, Q_r e^{v^{r+1}},$ $ v^{r+2}, \dots, v^n$ and one can 
take $Q_1=\cdots=Q_r=1$ (cf. \cite{Dubrovin2dtft, manin1999frobenius}). If $F_0({\bf v}):=F_0({\bf v}; Q)|_{Q_1=\cdots=Q_r=1}$ is convergent, then an analytic Dubrovin--Frobenius structure can be defined on its convergence domain. The {\it quantum cohomology of $X$} is the commutative ring given by flat vector fields on this Dubrovin--Frobenius manifold, and its restriction to $H^2(X;\mathbb C)$ is called the {\it small quantum cohomology of $X$}. The quantum cohomology of $X$ carries a canonical {\it calibration} \cite{Dubrovin2dtft,Givental2001GWquantizationof} obtained from genus 0 GW invariants, which can also be fixed by using an analytic criterion \cite{Galkin2014gammaclasses} (cf. \cite{cotti2018helix}).

In his 1998 ICM talk \cite{Dub98Geometry}, Dubrovin proposed a conjecture on the relationship between monodromy data of the quantum cohomology of a smooth projective variety and the bounded derived category of the same variety. We refer the readers to Section \ref{sec:conjecture} for a precise statement of the conjecture (see Conjecture \ref{originalconjecture}) and for a refinement proposed by Cotti, Dubrovin and Guzzetti \cite{cotti2018helix} (see Conjecture \ref{refinedconjecture}). Note that part 3 of the refined Dubrovin conjecture \ref{refinedconjecture} is equivalent \cite{cotti2018helix} to the Gamma conjecture II proposed by Galkin, Golyshev and Iritani \cite{Galkin2014gammaclasses}. Recently, some new significance of the Dubrovin conjecture (and its refined version) is given in \cite{yang2023analytic}. 

In the past two decades, the Dubrovin conjecture has been proved for some cases. Dubrovin proposed a method to directly compute the monodromy data in \cite{dubrovin1999painleve}, where the Stokes matrix for the quantum cohomology of $\mathbb{P}^2$ was computed. In \cite{Dub98Geometry}, Dubrovin computed the central connection matrix for $\mathbb{P}^2$, and proved the Dubrovin conjecture for this case. The Stokes matrix for an arbitrary complex projective space was computed by Guzzetti in \cite{guzzetti1999stokes}, and part 2 of Conjecture \ref{originalconjecture} was proved (part 1 of Conjecture \ref{originalconjecture} for this case was known). The same results were also obtained by Tanab{\'e} in \cite{Tanabe2004Invariantprojective}. 
Recently, the refined Dubrovin conjecture for an arbitrary Grassmanian was proved by Cotti, Dubrovin and Guzzetti \cite{cotti2018helix} (cf. \cite{Cotti2020LocalModuli,Ueda2005Grassmanian}). 
In \cite{cotti2022cyclicHirzebruch}, Cotti computed the monodromy data and then proved the refined conjecture for Hirzebruch surfaces. Part 2 of Conjecture \ref{originalconjecture} has been verified for other cases, including cubic surfaces by Ueda \cite{ueda2005stokescubic}, minimal Fano threefolds by Golyshev \cite{Golyshev2009Minimalexceptional}, a class of orbifold projective lines by Iwaki and Takahashi \cite{IwakiTakahashi2013stokesorbifold}, weighted projective spaces and certain smooth Fano hypersurfaces by Cruz Morales and van der Put \cite{MoralesPut2015stokesFano}. The Gamma conjecture II has been proved for arbitrary Grassmanians in \cite{Galkin2014gammaclasses} (cf. \cite{cotti2018helix}), for quadrics of arbitrary dimensions by Hu and Ke \cite{HuKe2023gammaquadrics} and for certain toric Fano manifolds by Fang and Zhou \cite{fang2019gammaiitoricvarieties}.


Let $V$ be the vector space $\mathbb{C}^{2n}$, equipped with a symplectic form. A subspace $\Sigma$ of $V$ is called {\it isotropic} if the restriction of the symplectic form to $\Sigma$ vanishes. The maximal possible dimension of an isotropic subspace is $n$, and in this case $\Sigma$ is called a {\it Lagrangian subspace}. The {\it Lagrangian Grassmanian} $LG(n,2n)$ is the projective complex manifold which parameterizes Lagrangian subspaces $\Sigma$ in $V$. A description of the small quantum cohomology of $LG(n,2n)$ was formulated by Kresch and Tamvakis by quantum Schubert calculus \cite{kresch2003quantum} (cf. \cite{SiebertTian1994OnQC}). The existence of a {\it full exceptional collection} in the bounded derived category of coherent sheaves on a Lagrangian Grassmanian was proved for $LG(3,6)$ by Samokhin \cite{Samokhin2001LG3C}, for $LG(4,8)$ by Polishchuk and Samokhin \cite{PolishchukSamokhin2011LG48510}, and for $LG(n,2n)$, $n \geq 1$ by Fonarev \cite{fonarev2022FECLagrangian}.

The main result of this paper is the following theorem. 
\begin{theorem}[= {\it Theorem \ref{maintheorem}}]\label{maintheorem2}
    The refined Dubrovin conjecture holds for the Lagrangian Grassmanian $LG(2,4)$. 
\end{theorem}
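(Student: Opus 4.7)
The plan is to verify all three parts of the refined Dubrovin conjecture (Conjecture~\ref{refinedconjecture}) for $LG(2,4)$ by direct computation of the monodromy data $(\mu,R,S,C)$ of the associated Dubrovin--Frobenius manifold, following the scheme developed in \cite{dubrovin1999painleve,cotti2018helix}. Since $\dim H^{*}(LG(2,4);\mathbb{C})=4$, the underlying Frobenius manifold is four-dimensional, which makes a closed-form computation realistic.

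First, I would write down the genus~$0$ primary free energy $F_0({\bf v})$ on this four-dimensional Frobenius manifold. The small quantum cohomology along $H^2$ is given explicitly by the Kresch--Tamvakis quantum Schubert calculus of~\cite{kresch2003quantum}; the full $F_0$ is then reconstructed from it via the WDVV equations combined with the Euler-field homogeneity (divisor axiom plus grading). After setting $Q=1$ I would pick a semisimple base point, compute the canonical coordinates $u_1,\dots,u_4$, and construct the transition matrix $\Psi$ from the flat frame to the orthonormal idempotent frame.

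Next I would analyse the extended Dubrovin connection: the formal solution of the associated ODE system in $z$ near $z=0$ yields $\mu$ and $R$ immediately, while the Stokes matrix $S$ is extracted by comparing the formal solution at $z=\infty$ with the actual Laplace-type solutions along an admissible ray. The central connection matrix $C$ is then determined by matching the canonical solution near $z=\infty$ with the topological solution near $z=0$ fixed by the $\widehat{\Gamma}$-criterion of \cite{Galkin2014gammaclasses} (equivalent in this setting to the calibration coming from genus~$0$ GW invariants). With $(\mu,R,S,C)$ in hand I would compare with the derived-category side, using the isomorphism $LG(2,4)\cong Q^3$ noted in the abstract: the Gram matrix of the Euler pairing on the full exceptional collection of Fonarev~\cite{fonarev2022FECLagrangian} (equivalently, Kapranov's collection on the quadric threefold) is the model Stokes matrix, and the $\widehat{\Gamma}$-twisted graded Chern characters of the exceptional objects are the target columns of~$C$. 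Parts~1--3 of Conjecture~\ref{refinedconjecture} then reduce to three finite-dimensional linear-algebraic equalities: part~1 from comparing $\mu,R$ with the grading and with multiplication by $c_1(LG(2,4))$, part~2 from matching $S$ with the exceptional-collection Gram matrix modulo a braid-group action with signs, and part~3 from checking the Gamma-class formula for~$C$.

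The hard part will be the explicit computation of the central connection matrix~$C$: tracking the constants that appear in the asymptotic expansions of the Laplace-type integrals, and proving that they recombine precisely into Gamma classes of the exceptional bundles on $LG(2,4)$, is the delicate step. The Stokes matrix, while laborious, is essentially reducible to computations inside the four-dimensional quantum cohomology algebra together with the signs prescribed by the chosen admissible line; by contrast, $C$ lives at the interface of two asymptotic regimes, and it is there that the identification with the derived-category data is most subtle. Any residual freedom in the calibration and in the resolution of the branch ambiguity will need to be pinned down in order to match the prediction of~\cite{cotti2018helix} on the nose.
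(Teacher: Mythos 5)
Your proposal follows essentially the same route as the paper: compute the monodromy data $(\mu,R,S,C)$ explicitly at a semisimple point of the small quantum cohomology, using Laplace-type (Mellin--Barnes) integral solutions of the quantum differential equation to extract $S$ and $C$, and then match $(S,C)$ against the Euler form and $\widehat{\Gamma}$-twisted Chern characters of a full exceptional collection on $LG(2,4)$ (Fonarev's collection, suitably twisted), modulo a braid-group action together with sign changes. One small inefficiency in your plan: you propose to reconstruct the full genus-$0$ potential $F_0$ from the small quantum cohomology via WDVV and homogeneity, but this is unnecessary. The paper works entirely at the point $q=1$ in the small quantum locus, invoking the known semisimplicity of the small quantum cohomology at that point \cite{Perrin2010quantumminusculeIII} and Cotti's theorem \cite{cotti2021degenerate} that semisimplicity of small QC already guarantees $F_0$ has a convergence domain; the relevant ODE system \eqref{odedubrovin} at $q=1$ is determined by the small quantum product alone. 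Otherwise the strategy, the identified pressure points (tracking constants in the asymptotic expansion of the integral solutions to compute $C$, resolving calibration and branch ambiguities), and the use of the exceptional collection and braid action all agree with what the paper does.
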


Since $LG(2,4)$ is isomorphic to the quadric in $\mathbb P^4$, Theorem \ref{maintheorem2} can also be deduced from the corollary of Hu-Ke \cite{HuKe2023gammaquadrics}. In this paper, we will give an explicit proof by computing the monodromy data for the quantum cohomology of $LG(2,4)$. 

The rest of the paper is organized as follows. In Section \ref{sec:2}, we review the monodromy data of semisimple Dubrovin--Frobenius manifolds. In Section \ref{sec:conjecture}, we recall some basic notions of derived category, and state the Dubrovin conjecture and a refined version. In Section \ref{sec:4}, we briefly review some results of the small quantum cohomology of $LG(2,4)$ and then give our proof of Theorem \ref{maintheorem2}.

\section{Review on monodromy data of semisimple Dubrovin--Frobenius manifolds}\label{sec:2}

A {\it Frobenius algebra} is a triple $(A, \, e, \, \langle\,,\,\rangle)$, where $A$ is a commutative and associative algebra over $\mathbb{C}$ with unity $e$, and $\langle\,,\,\rangle: A\times A\rightarrow \mathbb{C}$ is a symmetric, non-degenerate and bilinear product satisfying 
$\langle x\cdot y , z\rangle = \langle x, y\cdot z\rangle$, $\forall\,x,y,z \in A$. 
A {\it Dubrovin--Frobenius structure of charge~$D$} \cite{Dubrovin2dtft} on a complex manifold~$M$ is a family of 
Frobenius algebras $(T_p M, \, e_p, \, \langle\,,\,\rangle)_{p\in M}$, 
depending holomorphically on~$p$ and satisfying the following three axioms:
\begin{itemize}
\item[{1}]  The metric $\langle\,,\,\rangle$ is flat; moreover, 
denote by~$\nabla$ the Levi--Civita connection of~$\langle\,,\,\rangle$, then it is required that $\nabla e = 0$. 
\item[{2}] Define a $3$-tensor field~$c$ by 
$c(X,Y,Z):=\langle X\cdot Y,Z\rangle$, for $X,Y,Z$ being holomorphic vector fields on~$M$. 
Then the $4$-tensor field $\nabla c$ is required to be 
symmetric. 
\item[{3}]
There exists a holomorphic vector field~$E$ on~$M$ satisfying 
\begin{align}
&  [E, X\cdot Y]-[E,X]\cdot Y - X\cdot [E,Y] = X\cdot Y, \label{E2}\\
&  E \langle X,Y \rangle - \langle [E,X],Y\rangle - \langle X, [E,Y]\rangle 
= (2-D) \langle X,Y\rangle. \label{E3}
\end{align}
\end{itemize}
A complex manifold endowed with a Dubrovin--Frobenius structure of charge~$D$ is  
called a {\it Dubrovin--Frobenius manifold of charge~$D$}, with $\langle \, ,\, \rangle$ 
being called the {\it invariant flat metric} and $E$ the {\it Euler vector field}.
We denote the Dubrovin--Frobenius manifold by $(M,\, \cdot,\, e,\, \langle \,,\, \rangle,\, E)$. 
A point~$p$ on a Dubrovin--Frobenius manifold~$M$ is called a {\it semisimple point}, if $T_pM$ is semisimple. 
A Dubrovin--Frobenius manifold is called {\it semisimple}, if its generic points are semisimple points.

For an $n$-dimensional Dubrovin--Frobenius manifold $(M, \, \cdot, \, e, \, \langle\,,\,\rangle,\ E)$, an flat affine connection $\tilde{\nabla}$, called {\it extended Dubrovin connection}, can be defined \cite{Dubrovin2dtft} on $M\times \mathbb{C}^*$ through 
    \begin{equation}
    \begin{aligned}
        \tilde{\nabla}_X Y&:=\nabla_X Y + z X \cdot Y,
        \\ \tilde{\nabla}_{\partial_z} Y&:= \dfrac{\partial Y}{\partial z} + E \cdot Y- \dfrac{1}{z}\left(\frac{2-D}2 {\rm id} - \nabla E\right)Y,
        \\ \tilde{\nabla}_{\partial_z} \partial_z&:=0, \qquad \tilde{\nabla}_{X} \partial_z :=0.
    \end{aligned}
    \end{equation}
Here, $X,Y$ are arbitrary holomorphic vector fields on $M \times \mathbb{C}^*$ with 
zero component along $ \frac{\partial}{\partial z}$.
Choose flat coordinates ${\bf v}=(v^1,\dots,v^n)$ such that $E=\sum_{\beta=1}^n \left(\left(1-\frac{D}2-\mu_\beta\right) v^\beta + r^\beta\right) \frac{\partial}{\partial v^\beta}$. The differential system for a flat section $y=(y^1,\dots,y^n)^T$ reads (see \cite{Dubrovin2dtft, dubrovin1999painleve})
\begin{align}
& \frac{\partial y}{\partial v^\alpha} = z C_\alpha y, \quad \alpha=1,\dots,n, \label{pdedubrovin}\\
& \frac{d y}{d z} = \left(\mathcal{U}+\frac{\mu}{z} \right) y, \label{odedubrovin}     
\end{align}
where $C_{\alpha}$ is the matrix of multiplication by $\frac{\partial}{\partial v^\alpha}$, $\mathcal{U}$ is the matrix of multiplication by the Euler vector field and $\mu = \text{diag}(\mu_1,\dots,\mu_n)$.

Fix $\bf v$. Then there exists a fundamental matrix solution to \eqref{odedubrovin} of the form \cite{Dubrovin2dtft, dubrovin1999painleve} 
\begin{equation}\label{dubrovinleveltform}
\mathcal{Y}_0(z) = \Phi(z) z^\mu z^R,
\end{equation}
where $R$ is a constant matrix, $\Phi(z)$ is an analytic matrix-valued function on $\mathbb{C}$ satisfying 
\begin{equation}
\Phi(0) = I, \quad \Phi(-z)^T\eta\Phi(z)= \eta.
\end{equation}
The matrices $\mu,R$ are the {\it monodromy data at $z=0$ of the Dubrovin--Frobenius manifold} \cite{Dubrovin2dtft, dubrovin1999painleve}. 

Now we restrict our consideration to a semisimple point $p\in M$.  Denote the restriction of coordinate vector fields $(\frac{\partial}{\partial v^1},\dots,\frac{\partial}{\partial v^n})$ to this point by $(e_1,\dots,e_n)$. Let $u_1,\dots, u_n$ be eigenvalues to $\mathcal{U}$ and assume they are pairwise distinct. Let $f_1,\dots,f_n$ be the normalized eigenvectors such that
\begin{equation}
 f_i \cdot f_j = \delta_{ij} f_j\ \text{and}\ \langle  f_i ,  f_j \rangle = \delta_{ij}, \qquad i,j=1,\dots,n.
\end{equation}
Let $\Psi=(\psi_{i\alpha})$ be the transition matrix from the orthonormal basis $(f_1,\dots,f_n)$ to the basis $(e_1,\dots,e_n)$, i.e.,
\begin{equation}\label{psitransition}
e_{\alpha}=\sum_{i=1}^n \psi_{i\alpha} f_i , \quad \alpha=1,\dots,n.
\end{equation} 
For any fundamental matrix solution $\mathcal{Y}(z)$ to \eqref{odedubrovin}, let $Y(z)=\Psi \mathcal{Y}(z). $ Then $Y=Y(z)$ satisfies the equation 
\begin{align}
&\frac{dY}{dz} = \biggl(U+\frac{V}z\biggr) Y \label{Yeqcanonical}, 
\end{align}
where $U = \Psi \mathcal{U} \Psi^{-1} = {\rm diag}(u_1,\dots,u_n) $ and $ V=\Psi \mu \Psi^{-1}$. 

Note that $z=\infty$ is an irregular singularity of~\eqref{Yeqcanonical} 
of Poincar\'e rank~1. Then \cite{Dubrovin2dtft, dubrovin1999painleve} (cf.~\cite{Cotti2019Isomonodromydeformations, Cotti2020LocalModuli}) 
 equation \eqref{Yeqcanonical} has a unique formal solution $Y_{\rm formal}(z)$ of the form
\begin{equation}\label{asymptoticformal}
Y_{\rm formal}(z) = G (z) e^{z U},
\end{equation}
where $G (z)$ is a formal power series of~$z^{-1}$ with $G(\infty)=I$ and satisfies the orthogonality condition $G (-z)^T G (z) = I$. 

A line $\ell$ through the origin in the complex $z$-plane is called {\it admissible} if
\begin{equation}
{\rm Re} \, z(u_i- u_j)|_{z \in \ell \setminus 0} \neq 0, \quad \forall\, i\neq j.
\end{equation}
Fix any admissible line $\ell$ with an orientation. Denote by $\phi\in[0,2\pi)$ the angle from the positive real axis and to the positive direction of~$\ell$. 
Construct two sectors
\begin{align}
\Lambda_{\rm left}: \phi-\varepsilon<{\rm arg} \, z< \phi + \pi+ \varepsilon, \label{tildesectorL}\\ 
\Lambda_{\rm right}: \phi-\pi-\varepsilon<{\rm arg} \, z< \phi + \varepsilon, \label{tildesectorR} 
\end{align}
where $\varepsilon$ is a sufficiently small positive number. From~\cite{BalserJurkatLutz1979Birkhoff,BalserJurkatLutz1981OnTheReduction, Dubrovin2dtft, dubrovin1999painleve,guzzetti2021laplacetransform}(cf. \cite{guzzetti2016stokescoefficients})
there exist unique fundamental matrix solutions $Y_{L/R}(z)$ to the ODE system~\eqref{Yeqcanonical}, analytic in $\Pi_{{\rm left}/{\rm right}}$, such that 
\begin{equation}\label{YLRasymptotic}
Y_{L/R}(z) \sim Y_{\rm formal}(z)
\end{equation}
as $|z|\to\infty$ within the sectors. 
We will consider the behaviour of the matrix solutions on the narrow sectors
\begin{equation}
\Lambda_+: \phi-\varepsilon < {\rm arg} z < \phi+\varepsilon,
\end{equation}
and 
\begin{equation}
\Lambda_-: \phi-\pi-\varepsilon < {\rm arg} z < \phi-\pi+\varepsilon. 
\end{equation}

In fact, the sectors $\Lambda_{{\rm left}/{\rm right}}$ where the asymptotic behaviour \eqref{YLRasymptotic} holds can be extended. 
We introduce \emph{Stokes rays}, which are oriented rays $R_{ij}$ in $\mathbb C$ defined by
\begin{equation}\label{stokesrays}
    R_{ij}:=\left\{-i(\overline{u_i}-\overline{u_j})\rho\colon \rho \geq 0\right\}. 
\end{equation}
By definition, an admissible line $\ell$ must contain no Stokes rays. The sectors $\Lambda_{{\rm left}/{\rm right}}$ can be extended up to the first nearest Stokes ray (see \cite{BalserJurkatLutz1979Birkhoff}). We denote the extended sectors by $\Pi_{{\rm left}/{\rm right}}$ and the corresponding extended narrow sectors by $\Pi_+$ and $\Pi_{-}$. 
Within the narrow sector $\Pi_+$, the two analytic fundamental matrix solutions in \eqref{YLRasymptotic}  is related by a constant matrix $S$ called the {\it Stokes matrix with respect to the 
admissible line~$\ell$} (cf. \cite{Dubrovin2dtft, dubrovin1999painleve}):
\begin{equation}
Y_L(z) =Y_R(z) S, \quad z\in \Pi_+.
\end{equation} 
Similarly, according to \cite{Dubrovin2dtft, dubrovin1999painleve} (cf. \cite{ BalserJurkatLutz1981OnTheReduction}), in the narrow sector $\Pi_-$, we have
\begin{equation}
Y_L(z) =Y_R(z) S^T, \quad z\in \Pi_-.
\end{equation} 
Within the narrow sector $\Pi_+$, the fundamental matrix solutions 
$Y_{0}(z):=\Psi \mathcal{Y}(z)$ and $Y_{R}(z)$ 
to the ODE system~\eqref{Yeqcanonical} also be related by a constant matrix~$C$ called the {\it central connection matrix with respect to the 
admissible line~$\ell$}: 
\begin{equation}
Y_R(z) = Y_{0}(z) C.
\end{equation}
When $M$ is semisimple, the quadruple $(\mu, R, S, C)$ is called the {\it monodromy data of $M$ \cite{dubrovin1999painleve}}.


\begin{proposition}[\cite{Dubrovin2dtft,dubrovin1999painleve}]
$S$ and $C$ satisfy the following restrictions:
\begin{equation}
    C S^T S^{-1} C^{-1} = e^{2\pi i \mu} e^{2\pi i R}, 
\end{equation}
\begin{equation}
    S = C^{-1} e^{-\pi i R} e^{-\pi i \mu} \eta^{-1} (C^T)^{-1}. 
\end{equation}  
\end{proposition}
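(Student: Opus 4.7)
The plan is to derive both identities from the analysis of the monodromies of the fundamental solutions $Y_0$, $Y_L$, $Y_R$ around $z = 0$, combined with the bilinear orthogonality built into the Dubrovin--Frobenius setup.

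For the first identity, I would start from the observation that analytic continuation of $Y_0(z) = \Psi \Phi(z) z^\mu z^R$ once counterclockwise around $z = 0$ produces exactly the factor $e^{2\pi i \mu} e^{2\pi i R}$, since $\Phi$ is single-valued on $\mathbb{C}$. Because $Y_R = Y_0 C$, the monodromy of $Y_R$ about $z = 0$ equals $C^{-1} e^{2\pi i \mu} e^{2\pi i R} C$. Next I would compute the same monodromy purely from the Stokes data: starting in $\Pi_+$ where $Y_L = Y_R S$, I would analytically continue $Y_L$ counterclockwise across $\Pi_{\rm left}$ into $\Pi_-$ (where $Y_L$ remains single-valued), and then use the relation $Y_L = Y_R S^T$ valid in $\Pi_-$ to re-express things back in terms of $Y_R$. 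Tracking the net effect across one full loop identifies the monodromy of $Y_R$ with the matrix $S^T S^{-1}$, and equating the two expressions yields the first identity.

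For the second identity, I would exploit the symmetry $z \mapsto -z$ of the canonical ODE~\eqref{Yeqcanonical}, which comes from the antisymmetry $V^T = -V$ (itself a consequence of $\eta \mu + \mu^T \eta = 0$ together with $\eta = \Psi^T \Psi$). A short computation shows that if $Y(z)$ solves \eqref{Yeqcanonical} then so does $(Y(-z)^T)^{-1}$, and thanks to the orthogonality $G(-z)^T G(z) = I$ the solution $(Y_R(-z)^T)^{-1}$ has the same formal asymptotic $G(z) e^{zU}$ as $Y_L(z)$ on the sector $\Pi_{\rm left}$ (which is the $z \mapsto -z$ image of $\Pi_{\rm right}$); uniqueness of such a solution then forces $(Y_R(-z)^T)^{-1} = Y_L(z)$. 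Combining this bilinear identity with $Y_L = Y_R S$ in $\Pi_+$ and $Y_R = Y_0 C$, I would translate everything into a relation between $Y_0(-z)$ and $Y_0(z)^{-T}$. Substituting $Y_0(z) = \Psi \Phi(z) z^\mu z^R$, applying the orthogonality $\Phi(-z)^T \eta \Phi(z) = \eta$, using $\eta z^\mu = z^{-\mu} \eta$ and the analogous compatibility of $\eta$ with $R$ from the Dubrovin--Frobenius structure, and collecting the branch contributions $(-z)^\mu = z^\mu e^{i\pi \mu}$ and $(-z)^R = z^R e^{i\pi R}$, the $\Phi$ and $z$-dependent factors should cancel and leave precisely the desired second identity.

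The main technical obstacle in both parts is the careful bookkeeping of branches of $\log z$ and of which ``sheet'' of $Y_L$ or $Y_R$ is used in the overlap regions $\Pi_\pm$; obtaining the correct signs in the exponents $e^{\pm \pi i R} e^{\pm \pi i \mu}$ requires a consistent choice of admissible line orientation and of the branch convention $\log(-z) = \log z + i\pi$. Once these conventions are fixed, the cancellations in both identities are mechanical, and the details of this bookkeeping are carried out in \cite{Dubrovin2dtft, dubrovin1999painleve}.
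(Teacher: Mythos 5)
The paper does not actually prove this proposition; it cites it directly from Dubrovin's works \cite{Dubrovin2dtft,dubrovin1999painleve}. Your proposal correctly reconstructs Dubrovin's strategy: the first identity by computing the monodromy of $Y_R$ around $z=0$ in two ways (once from $Y_R = Y_0 C$ and the explicit monodromy $e^{2\pi i\mu}e^{2\pi i R}$ of $\mathcal{Y}_0(z)=\Phi(z)z^\mu z^R$, once by chaining the Stokes relations $Y_L=Y_RS$ and $Y_L=Y_RS^T$ through a full counterclockwise turn), and the second via the $z\mapsto -z$ symmetry of the canonical system, the identity $(Y_R(-z)^T)^{-1}=Y_L(z)$ coming from $V^T=-V$ and uniqueness of the sectorial solution, and then substituting $Y_R=\Psi\,\mathcal{Y}_0\,C$ together with $\Phi(-z)^T\eta\Phi(z)=\eta$.

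Two points of bookkeeping are worth flagging, both of which you anticipate but do not resolve correctly as stated. First, the branch of $\log(-z)$: with the paper's sector conventions ($\Pi_{\rm right}$ covering $\phi-\pi-\varepsilon<\arg z<\phi+\varepsilon$ and $\Pi_{\rm left}$ covering $\phi-\varepsilon<\arg z<\phi+\pi+\varepsilon$), the map $z\mapsto -z$ taking $\Pi_{\rm left}$ into $\Pi_{\rm right}$ is realized by $\arg(-z)=\arg z-\pi$, i.e.\ $\log(-z)=\log z - i\pi$, so $(-z)^\mu=z^\mu e^{-i\pi\mu}$, $(-z)^R=z^R e^{-i\pi R}$. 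With your stated convention $\log(-z)=\log z+i\pi$ one lands on the shifted sheet of $\Pi_{\rm right}$ and the computation produces $S^{-1}=C^T\eta\,e^{-i\pi\mu}e^{-i\pi R}\,C$, which has the wrong signs. With the $-i\pi$ branch the chain $Y_R(-z)^TY_R(z)=S^{-1}$ and $\mathcal{Y}_0(-z)^T\eta\,\mathcal{Y}_0(z)=\eta\,e^{i\pi\mu}e^{i\pi R}$ gives precisely $S=C^{-1}e^{-i\pi R}e^{-i\pi\mu}\eta^{-1}(C^T)^{-1}$. Second, the ``analogous compatibility of $\eta$ with $R$'' is $\eta R = R^T\eta$ (not the skew version $\eta R=-R^T\eta$ that the analogy with $\mu$ might suggest): $R$ is multiplication by $c_1(X)$, which is self-adjoint, not skew-adjoint, for the Poincar\'e pairing. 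Getting this sign wrong also destroys the cancellation of the $z^{R^T}\cdots z^R$ factors. Once those two conventions are fixed your outline goes through and agrees with the cited references.
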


Following Dubrovin \cite{Dubrovin2dtft,dubrovin1999painleve}, we consider an action of the braid group $\mathcal B_n$ on the set of monodromy data $(S, C)$. Denote by $\beta_{1,2},\dots, \beta_{n-1,n}$ the generators of $\mathcal B_n$, satisfying the relations 
\begin{align*}
   \beta_{i,i+1}\beta_{i+1,i+2}\beta_{i,i+1}=\beta_{i+1,i+2}\beta_{i,i+1}\beta_{i+1,i+2},\\ \beta_{i,i+1}\beta_{j,j+1}=\beta_{j,j+1}\beta_{i,i+1},\quad
\text{if}\ \ |i-j|>1. 
\end{align*}
Let $\ell$ be an admissible line and $(S, C)$ be the monodromy data with respect to $\ell$. Choose the order of canonical coordinates $(u_1,\dots,u_n)$ such that $S$ is an upper triangular matrix. The action of the elementary braid $\beta_{i,i+1}\in\mathcal B_{N}$, $1\leq i \leq n-1$ has the form \cite{Dubrovin2dtft,dubrovin1999painleve}
\begin{equation}
\beta_{i,i+1}(S):= K^{\beta_{i,i+1}}(S)~ S~ K^{\beta_{i,i+1}}(S),
\quad \beta_{i,i+1}(C):= C~ (K^{\beta_{i,i+1}}(S))^{-1},
\end{equation}
where
\begin{equation}
\left(K^{\beta_{i,i+1}}(S)\right)_{kk}=1,\quad\quad k=1,\dots, N\quad k\neq i,i+1,
\end{equation}
\begin{equation}\left(K^{\beta_{i,i+1}}(S)\right)_{i+1,i+1}=-s_{i,i+1},
\end{equation}
\begin{equation}
\left(K^{\beta_{i,i+1}}(S)\right)_{i,i+1}=\left(K^{\beta_{i,i+1}}(S)\right)_{i+1,i}=1, 
\end{equation}
and all other entries of the matrix $K^{\beta_{i,i+1}}$ are equal to zero.

An important class of Dubrovin--Frobenius manifolds come from quantum cohomology. To be precise, let $X$ be a $D$-dimensional smooth projective variety with vanishing odd cohomology, and $X_{g,k,\beta}$ the moduli space of stable maps of degree $\beta\in H_2(X;\mathbb{Z})/{\rm torsion}$ with target~$X$ from curves of genus~$g$ with~$k$ distinct marked points. Here, $g,k\ge0$. We denote the Poincar\'e pairing on 
$H^{*}(X;\mathbb{C})$ by $\langle\,,\,\rangle$. Choose a homogeneous basis $\phi_1 = 1,\phi_2,\dots,\phi_n$ of the cohomology ring $H^{*}(X;\mathbb{C})$ such that $\phi_\alpha \in H^{2q_\alpha}(X;\mathbb C)$, $\alpha=1,\dots,n$, 
 $0=q_1<q_2 \leq \dots \leq q_{n-1} < q_n = d$. 
The integrals
\begin{equation}
   \int_{[X_{g,k,\beta}]^{\rm virt}} c_1(\mathcal{L}_1)^{i_1} {\rm ev}_1^*(\phi_{\alpha_1}) \cdots c_1(\mathcal{L}_k)^{i_k} {\rm ev}_k^*(\phi_{\alpha_k}), \quad 
\alpha_1,\dots,\alpha_k=1,\dots,l, \, i_1,\dots,i_k\geq0, 
\end{equation}
are called {\it Gromov--Witten (GW) invariants of $X$ of genus $g$ and degree~$\beta$} \cite{Behrend1997,kontsevich1994gromov,Dubrovin2dtft,li1998virtual,manin1999frobenius,ruan1995mathematical}. Here, ${\rm ev}_a$, $a=1,\dots,k$ are the evaluation maps, $\mathcal{L}_a$ is the $a$th tautological line bundle on $X_{g,k,\beta}$, 
and $[X_{g,k,\beta}]^{\rm virt}$ is the virtual fundamental class.
The {\it genus~$g$ primary free energy $F_g({\bf v};Q)$} for the GW invariants of~$X$ is defined by 
\begin{equation}\label{Fgenusg}
F_g({\bf v};Q) = \sum_{k\ge0}\sum_{\alpha_1,\dots,\alpha_k \ge0} \sum_{\beta}\frac{Q^\beta v^{\alpha_1} \cdots v^{\alpha_k} }{k!} 
\int_{[X_{g,k,\beta}]^{\rm virt}} {\rm ev}_1^*(\phi_{\alpha_1}) \cdots {\rm ev}_k^*(\phi_{\alpha_k}),
\end{equation}
where $(v^1,\dots,v^n)$ are indeterminates, and 
\begin{equation}
Q^\beta=Q_1^{m_1} \cdots Q_r^{m_r} \quad ({\rm for}~\beta=m_1\beta_1+\dots+m_r\beta_r)
\end{equation}
is an element of the Novikov ring $\mathbb C [[Q_1,\dots,Q_r]]$. Here, $(\beta_1,\dots,\beta_r)$ is a basis of $H_2(X;\mathbb{Z})/{\rm torsion}$. 

Now we restrict our consideration to the case that $X$ is Fano. Then for any fixed $k\ge0$ and $\alpha_1,\dots,\alpha_k \in \{1,\dots,n\}$, 
the sum $\sum_\beta$ in \eqref{Fgenusg} is a finite sum. Moreover, by the divisor equation \cite{Dubrovin2dtft, manin1999frobenius}
\begin{equation}
        \int_{[X_{g,k+1,\beta}]^{\text{vir}}}  \text{ev}_1^*(\phi_{\alpha_1})\cdots \text{ev}_k^*(\phi_{\alpha_k})\phi=\left(\int_{\beta}\phi\right) \int_{[X_{g,k,\beta}]^{\text{vir}}}  \text{ev}_1^*(\phi_{\alpha_1})\cdots \text{ev}_k^*(\phi_{\alpha_k}). 
\end{equation} 
we know that 
$F_0({\bf v};Q)$ is a power series of $v^1, Q_1 e^{v^2}, \dots, Q_r e^{v^{r+1}}, v^{r+2}, \dots, v^n$. Take $Q_1=\cdots=Q_r=1$. 
Denote 
\begin{equation}\label{F0v}
    F_0({\bf v}):=F_0({\bf v}; Q)|_{Q_1=\cdots=Q_r=1}. 
\end{equation}
Assume that the power series $F_0({\bf v})$ has a convergence domain $\Omega$. Then $F_0({\bf v})$ leads to an analytic Dubrovin--Frobenius structure of charge~$D$ on $\Omega$ \cite{kontsevich1994gromov,ruan1995mathematical,Dubrovin2dtft}, with the invariant flat metric $\langle \,,\, \rangle$, the unity vector field $e=\frac{\partial}{\partial v^1}$ and the Euler vector field~$E$ given by 
\begin{equation}
     E=c_1(X)+\sum_{\alpha=1}^n\left(1-\dfrac{1}{2}\text{deg}\phi_\alpha\right)v^{\alpha}\phi_{\alpha},
\end{equation}
where $c_1(X)$ is the first Chern class of $X$. We call the commutative ring given by flat vector fields on the Dubrovin--Frobenius manifold by the {\it quantum cohomology of $X$}, denoted by $QH(X)$, and the restriction of this ring to $\Omega \cap H^2(X;\mathbb C)$ (i.e. points with coordinates $v^i=0$ unless $i=2,\dots, r+1$) the {\it small quantum cohomology of $X$}. Note that the constant matrix $R$ in \eqref{dubrovinleveltform} can be chosen to correspond to  $c_1(X) \cup \colon H^*(X;\mathbb C)\to H^*(X;\mathbb C)$ \cite{dubrovin1999painleve}.

For the quantum cohomology of a smooth projective variety $X$, a canonical {\it calibration} \cite{Dubrovin2dtft,Givental2001GWquantizationof}  obtained from genus 0 GW invariants can be chosen.

\begin{proposition}[\cite{Dubrovin2dtft, dubrovin1999painleve, Galkin2014gammaclasses, Cotti2020LocalModuli, cotti2018helix}]\label{topologicalenumerativeprop}
Fix a point whose coordinates are ${\bf v}=(v^1,\dots,v^n)$ on the quantum cohomoloy of a smooth projective variety $X$. Then a calibration at this point is given by
\begin{align}
&\mathcal Y_{\rm top}(z):=\Phi_{\rm top}(z) z^\mu z^{R},\label{topenumerativesolution} \\
\Phi_{\rm top}(z)_\lambda^\gamma:&=\delta_\lambda^\gamma+\sum_{k,l\geq 0}\sum_{\beta}\sum_{\alpha_1,\dots,\alpha_k}\frac{h_{\lambda,k,l,\beta,\underline\alpha}^\gamma}{k!} v^{\alpha_1}\dots v^{\alpha_k} z^{l+1}. 
\end{align}
Here
\begin{equation}
    h_{\lambda,k,l,\beta,\underline\alpha}^\gamma:=\sum_{\epsilon}\eta^{\epsilon\gamma}\int_{[X_{0,k+2,\beta}]^\text{virt}}c_1(\mathcal L_1)^l {\rm ev}_1^*(\phi_\lambda) {\rm ev}_2^*(\phi_{\epsilon})\prod_{j=1}^k{\rm ev}^*_{j+2}(\phi_{\alpha_j}). 
\end{equation}
\end{proposition}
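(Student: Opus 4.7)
The plan is to verify directly that $\mathcal Y_{\rm top}(z) = \Phi_{\rm top}(z)\, z^\mu z^R$ satisfies the four conditions characterizing a calibration at ${\bf v}$: (a) $\Phi_{\rm top}(0) = I$; (b) it solves the PDE system \eqref{pdedubrovin}; (c) it solves the ODE \eqref{odedubrovin} with $\mu$ the degree grading and $R$ the operator $c_1(X)\cup$; and (d) $\Phi_{\rm top}(-z)^T \eta\, \Phi_{\rm top}(z) = \eta$. Item (a) is immediate from the formula, since the correction to $\delta_\lambda^\gamma$ begins at $z^{l+1}$ with $l \ge 0$. For (b), I would compare coefficients of $z^{l+1} v^{\alpha_1}\cdots v^{\alpha_k}$ on the two sides of $\partial_\alpha \Phi_{\rm top} = z\, C_\alpha \Phi_{\rm top}$. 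The resulting identity between genus-$0$ GW invariants is precisely the genus-$0$ topological recursion relation applied to a descendent correlator with insertions $\tau_l(\phi_\lambda)$, $\tau_0(\phi_\epsilon)$, and $\tau_0(\phi_\alpha)$, supplemented by the string equation to absorb the constant contribution of $\delta_\lambda^\gamma$ at $l=0$. This is the classical ``quantum $\mathcal D$-module'' computation showing that $\Phi_{\rm top}$ encodes (a matrix form of) Givental's $J$-function.

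For (c), cancelling the factor $z^\mu z^R$ in \eqref{odedubrovin} reduces the ODE to
\[
z\,\partial_z \Phi_{\rm top} + [\mu,\Phi_{\rm top}] + \Phi_{\rm top}\, R \;=\; z\, \mathcal U\, \Phi_{\rm top}.
\]
Reading off the coefficient of each $v^{\alpha_1}\cdots v^{\alpha_k}$, this becomes a quasi-homogeneity identity for $h^\gamma_{\lambda,k,l,\beta,\underline\alpha}$, which follows from the degree axiom on $[X_{0,k+2,\beta}]^{\rm virt}$ together with the Euler vector field $E = c_1(X) + \sum_\alpha (1-\tfrac{1}{2}\deg \phi_\alpha) v^\alpha \phi_\alpha$; the term $\Phi_{\rm top}\, R$ is produced by applying the divisor equation to the $c_1(X)$ part of $E$, so the choice of $R$ as the operator $c_1(X)\cup$ in the flat basis is exactly what is needed. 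For (d), differentiating $\Phi_{\rm top}(-z)^T \eta\, \Phi_{\rm top}(z)$ with respect to $v^\alpha$ and using the PDE from (b) together with the Frobenius identity $C_\alpha^T \eta = \eta C_\alpha$ shows the expression is ${\bf v}$-independent; a parallel argument with the ODE from (c) shows $z$-independence; and (a) identifies the constant value with $\eta$.

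The main obstacle is (b). The combinatorial matching of the power-series expansion against the TRR is delicate: the string equation must be invoked to convert the $l=0$ contribution of the $\delta_\lambda^\gamma$ term into a $\tau_0$-insertion inside a correlator, and one has to track carefully the $1/k!$ factors, the symmetrization over the $\alpha_j$, and the raised metric $\eta^{\epsilon\gamma}$ appearing in the definition of $h^\gamma_{\lambda,k,l,\beta,\underline\alpha}$, in such a way that $z\, C_\alpha \Phi_{\rm top}$ reproduces exactly the increment $\partial_\alpha \Phi_{\rm top}$ order by order. Once this combinatorial identity is laid out, items (c) and (d) are routine consequences of the standard GW axioms (degree, divisor, symmetry under exchange of marked points).
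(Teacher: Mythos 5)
The paper does not prove Proposition~\ref{topologicalenumerativeprop} itself; it is stated as a known result with citations to Dubrovin, Givental, Galkin--Golyshev--Iritani, and Cotti--Dubrovin--Guzzetti. Your plan is exactly the verification that those references carry out: check $\Phi_{\rm top}(0)=I$, the PDE via the genus-$0$ TRR, the ODE via the degree and divisor axioms, and orthogonality via the Frobenius identities plus the initial condition. So the approach is the standard one, and the structure of the argument is sound.

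A few slips worth fixing. First, the reduced form of the $z$-ODE is off: substituting $\mathcal Y=\Phi\, z^\mu z^R$ into \eqref{odedubrovin} and using $[\mu,R]=R$ (so that $z^\mu R z^{-\mu}=zR$) gives
\begin{equation*}
z\,\partial_z \Phi_{\rm top} \;-\; [\mu,\Phi_{\rm top}] \;+\; z\,\Phi_{\rm top} R \;=\; z\,\mathcal U\,\Phi_{\rm top},
\end{equation*}
i.e.\ the commutator enters with a minus sign and the $\Phi_{\rm top}R$ term carries a factor of $z$. Second, the string equation is not actually needed in step (b): matching the coefficient of $z^1$ on both sides of $\partial_\alpha\Phi_{\rm top}=zC_\alpha\Phi_{\rm top}$ gives $\sum_\epsilon\eta^{\epsilon\gamma}\langle\!\langle\tau_0(\phi_\lambda),\tau_0(\phi_\epsilon),\tau_0(\phi_\alpha)\rangle\!\rangle_0=(C_\alpha)^\gamma_\lambda$, which is precisely the definition of the structure constants, while for $l\ge 1$ the matching is the double-bracket TRR. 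Third, in step (d) the function $W(z)=\Phi_{\rm top}(-z)^T\eta\,\Phi_{\rm top}(z)$ is not literally $z$-independent: the PDE plus $C_\alpha^T\eta=\eta C_\alpha$ gives ${\bf v}$-independence, but the ODE gives $W'=-\tfrac1z(\mu W+W\mu)+R^TW-WR$. What is true is that $W\equiv\eta$ is the unique analytic solution of this equation with $W(0)=\eta$ (using $\mu\eta+\eta\mu=0$ and $R^T\eta=\eta R$), so the conclusion holds, but the phrasing ``$z$-independence by a parallel argument'' overstates what the ODE directly gives. None of these affect the overall validity of the plan.
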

We call \eqref{topenumerativesolution} the {\it topological-enumerative solution} of \eqref{odedubrovin} .

In \cite{Galkin2014gammaclasses}, Galkin, Golyshev and Iritani gave an analytic criterion to obtain the above calibration when $X$ is Fano. 
\begin{proposition}[\cite{Galkin2014gammaclasses}, cf. \cite{Cotti2020LocalModuli, cotti2018helix}]\label{topologicalenumerativecriterion}
Let $X$ be a Fano manifold and fix a point ${\bf v}$ on the small quantum cohomology of $X$ (i.e. point with coordinates $v^i=0$ unless $i=2,\dots, r+1$). Then among all solutions of the form
\[\Phi(z)z^\mu z^{R}\] to \eqref{odedubrovin} around $z=0$, the topological-enumerative solution \eqref{topenumerativesolution} is the unique one for which the product
\begin{equation}
    H(z)=z^{-\mu}\Phi(z)z^\mu,
\end{equation} 
is holomorphic at $z=0$ and satisfies
\begin{equation}
    H(0)=\exp\left(\left(\sum_{i=2}^{r+1} v^i \phi_i\right)\cup(-)\right).
\end{equation}
\end{proposition}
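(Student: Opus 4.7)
I would split the proposition into an existence step (showing $\mathcal{Y}_{\rm top}$ itself satisfies the two conditions) and a uniqueness step (showing no other solution of the prescribed form does). Both rely on the genus-zero degree/selection axiom for GW invariants combined with the Fano hypothesis; Fano-ness is precisely what rules out negative powers of $z$ in $H(z)$.

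\emph{Existence.} Since $\mu=\mathrm{diag}(q_1-D/2,\dots,q_n-D/2)$, the $(\gamma,\lambda)$-entry of $H(z):=z^{-\mu}\Phi_{\rm top}(z)z^{\mu}$ acquires an overall prefactor $z^{q_\lambda-q_\gamma}$. Applying the degree axiom to each coefficient $h^{\gamma}_{\lambda,k,l,\beta,\underline\alpha}$ at a small-quantum point, where $v^i=0$ unless $\phi_i\in H^2$ so only $q_{\alpha_j}=1$ contributes, the virtual dimension of $X_{0,k+2,\beta}$ forces $l+1=(q_\gamma-q_\lambda)+\int_\beta c_1(X)$. Hence the surviving monomials in $H(z)^\gamma_\lambda$ have power exactly $z^{\int_\beta c_1(X)}$, which is $\ge 0$ for every effective $\beta$ by the Fano hypothesis. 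Only $\beta=0$ survives at $z=0$; on $X_{0,k+2,0}\cong X\times\overline{M}_{0,k+2}$ the evaluation maps factor through the projection to $X$, the integral splits as a product, and the classical identity $\int_{\overline{M}_{0,k+2}}\psi_1^{\,k-1}=1$ assembles the sum into
\[
H(0)^\gamma_\lambda=\sum_{k\ge 0}\frac{1}{k!}\sum_{\epsilon,\alpha_1,\dots,\alpha_k}\eta^{\epsilon\gamma}\Bigl(\int_X\phi_\lambda\phi_\epsilon\prod_{j=1}^k\phi_{\alpha_j}\Bigr)v^{\alpha_1}\cdots v^{\alpha_k},
\]
which is the matrix of $\exp\bigl(\bigl(\sum_{i=2}^{r+1}v^i\phi_i\bigr)\cup\bigr)$ on cohomology.

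\emph{Uniqueness.} Given two candidate solutions $\Phi_1(z)z^\mu z^{R_1}$ and $\Phi_2(z)z^\mu z^{R_2}$ of \eqref{odedubrovin}, they differ by a constant invertible matrix $G$ on the right, $\Phi_2 z^\mu z^{R_2}=\Phi_1 z^\mu z^{R_1}G$. Conjugating by $z^{-\mu}$ yields $H_2(z)=H_1(z)\,M(z)$ with $M(z):=z^\mu z^{R_1}G z^{-R_2}z^{-\mu}$. Holomorphicity of both $H_i$ at $z=0$ together with the invertibility of $H_1(0)$ transfers to $M(z)$, and the common value $H_1(0)=H_2(0)$ forces $M(0)=I$. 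Exploiting the block-nilpotent decomposition $R_i=\sum_{k\ge 1}R_{i,k}$, with $(R_{i,k})^\alpha_\beta=0$ unless $\mu_\alpha-\mu_\beta=k$, I would expand $M(z)$ as a finite polynomial in $\log z$ whose higher-order coefficients must vanish by single-valuedness, and then match remaining powers of $z$ against the cohomological grading to collapse $M(z)$ to the identity, forcing $G=I$, $R_1=R_2$ and $\Phi_1=\Phi_2$.

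The main obstacle is this final gauge-rigidity step. The integer resonances $\mu_\alpha-\mu_\beta\in\mathbb Z$, inevitable because the $q_\alpha$ are integers, permit $M(z)$ a priori to carry mixed polynomial-and-logarithmic contributions when $R_1\neq R_2$, and ruling these out requires careful bookkeeping of the degree-shifting structure of $R$ against the holomorphicity constraint imposed on $H_i$ by the Fano hypothesis.
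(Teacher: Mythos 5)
The paper does not prove this proposition; it is quoted from \cite{Galkin2014gammaclasses} (cf. \cite{Cotti2020LocalModuli,cotti2018helix}), so there is no internal proof to compare against. I therefore evaluate your attempt on its own merits.

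Your existence step is correct and is essentially the standard argument. The degree count $l+1=(q_\gamma-q_\lambda)+\int_\beta c_1(X)$, the observation that the resulting exponent in $H^\gamma_\lambda$ is exactly $\int_\beta c_1(X)\geq 0$ by Fano-ness, and the factorization over $X_{0,k+2,0}\cong X\times\overline{M}_{0,k+2}$ with $\int_{\overline{M}_{0,k+2}}\psi_1^{k-1}=1$ all check out, and they do assemble $H(0)$ into the matrix of $\exp\bigl(\bigl(\sum_i v^i\phi_i\bigr)\cup\bigr)$. You should add one sentence handling the low-$k$ bookkeeping: the $k=0$, $\beta=0$ summand (where $X_{0,2,0}$ is unstable) is replaced by the explicit $\delta^\gamma_\lambda$ in the definition of $\Phi_{\rm top}$, and the case $k=1$ uses $\psi_1=0$ on $X_{0,3,0}\cong X$.

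In the uniqueness step there is an algebraic slip that makes the final ``gauge-rigidity'' step look harder than it is. From $\Phi_2 z^\mu z^{R_2}=\Phi_1 z^\mu z^{R_1}G$, left-multiplying by $z^{-\mu}$ gives $H_2\,z^{R_2}=H_1\,z^{R_1}G$, hence
\[
H_2(z)=H_1(z)\,N(z),\qquad N(z):=z^{R_1}G\,z^{-R_2},
\]
\emph{not} $H_1(z)\cdot z^\mu z^{R_1}Gz^{-R_2}z^{-\mu}$ as you wrote. The distinction matters: $N(z)$ carries no $z^{\mu_\alpha-\mu_\beta}$ prefactors at all; it is a bare polynomial in $\log z$ because $R_1,R_2$ are nilpotent. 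Since $H_1,H_2$ are single-valued and $H_1(0)$ is invertible, $N=H_1^{-1}H_2$ is single-valued near $z=0$, and a nonconstant polynomial in $\log z$ cannot be single-valued, so $N$ is constant, hence $N\equiv N(1)=G$. Now $H_2(0)=H_1(0)G$ together with the assumed common value $H_1(0)=H_2(0)=\exp(\cdots)$ (which is invertible) forces $G=I$. Finally, $z^{R_1}Gz^{-R_2}=I$ with $G=I$ gives $z^{R_1}=z^{R_2}$ and therefore $R_1=R_2$ and $\Phi_1=\Phi_2$. With the correct $N(z)$, the ``main obstacle'' you flag --- integer resonances producing mixed polynomial-and-logarithmic terms needing careful bookkeeping --- simply does not arise: those $z^{\mu_\alpha-\mu_\beta}$ factors never enter the relation between $H_1$ and $H_2$. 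I would also discard the intermediate claim ``$M(0)=I$'' in your draft, since your $M(z)$ has entries like $c\,z^{\mu_\alpha-\mu_\beta}(\log z)^j$ for which evaluation at $0$ is not even well defined; the corrected argument never needs $N(0)$ before constancy is established.
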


\section{Exceptional collections, Gamma classes and a refined version of the Dubrovin conjecture}\label{sec:conjecture} Let $X$ be a smooth projective variety of complex dimension $D$ with odd-vanishing cohomology, and denote by $\mathcal{D}^b(X)$ the bounded derived category of coherent sheaves on $X$.  Given $E,F\in{\rm Ob}\left(\mathcal{D}^b(X)\right)$, define $\text{Hom}^*(E,F)$ as the $\mathbb{C}$-vector space
\begin{equation*}
\text{Hom}^*(E,F):=\bigoplus_{k\in\mathbb{Z}}\text{Hom}(E,F[k]).
\end{equation*}
A collection $(E_1,\dots, E_n)$ of objects of $\mathcal{D}^b(X)$ is called an \emph{exceptional collection} if 
\begin{enumerate}
\item $\text{Hom}^*(E_k,E_k)$ is a one dimensional $\mathbb{C}$-algebra for each $1\leq k \leq n$, generated by the identity morphism,
\item we have $\text{Hom}^*(E_j,E_k)=0$ for $j>k$.
\end{enumerate}
An exceptional collection is \emph{full} if any triangular subcategory containing all objects of this collection is equivalent to $\mathcal{D}^b(X)$ via the inclusion functor.

Now let $TX$ be the tangent bundle of $X$, and let $\delta_1,\dots,\delta_D$ be the Chern roots of $TX$, so that 
\[c_k(TX)=\sigma_k(\delta_1,\dots,\delta_r),\quad 1\leq k\leq D, 
\]where $\sigma_k$ is the $k$-th elementary symmetric polynomial.
The \emph{Gamma classes} of $X$ are the characteristic classes (cf. \cite{hirzebruch1966topological})
\begin{equation}
\widehat{\Gamma}_X^\pm:=\prod_{j=1}^D\Gamma(1\pm\delta_j), 
\end{equation}
where $\gamma$ is the Euler-Mascheroni constant and $\zeta$ is the Riemann zeta function.

Consider an arbitrary vector bundle $V$ on $X$, and let $\tau_1,\dots,\tau_r$ be the Chern roots of $V$. 
The {\it Chern character} of $V$ is the characteristic class ${\rm Ch}(V)\in H^*(X)$ defined by ${\rm Ch}(V):=\sum_{j=1}^r e^{2\pi i \tau_j}$.

The following conjecture was proposed by Dubrovin in his 1998 ICM talk.
\begin{conjecture}[\cite{Dub98Geometry}]\label{originalconjecture} Assume that $X$ is Fano and that the power series $F_0(\textbf{v})$ has a convergence domain $\Omega$. Then
\begin{enumerate}
    \item [1.] the quantum cohomology of $X$ is semisimple if and only if $\mathcal{D}^b(X)$ admits a full exceptional collection. 
\end{enumerate}
If the quantum cohomology of $X$ is semisimple, then there exists a full exceptional collection $(E_1,\dots, E_n)$ of $\mathcal{D}^b(X)$ such that: 
\begin{enumerate}
    \setcounter{enumi}{+1}
    \item [2.] the Stokes matrix $S$ is equal to the inverse of the Euler matrix $\left(\chi(E_j,E_k)\right)_{1\leq j,k \leq n}$;
    \item [3.] the central connection matrix has the form $C=C'C''$ where the columns of $C''$ are coordinates of ${\rm Ch}(E_i)$, and $C' \colon H^*(X;\mathbb C)\to H^*(X;\mathbb C) $ is an operator commuting with $c_1(X)\cup \colon H^*(X;\mathbb C)\to H^*(X;\mathbb C)$.
\end{enumerate}
\end{conjecture}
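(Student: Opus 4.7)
The plan is to treat the three parts of Conjecture \ref{originalconjecture} as a single coordinated program, using the isomonodromicity of the extended Dubrovin connection as the central tool. Because the monodromy data $(\mu,R,S,C)$ are locally constant on the semisimple locus of the Frobenius manifold, it suffices to study them at one convenient point; the most natural choice is the small quantum locus where $v^i=0$ for $i\notin\{2,\dots,r+1\}$, at which the $J$-function admits a closed hypergeometric expression and the Euler vector field $E=c_1(X)+\cdots$ is easy to diagonalise.

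For part 1 the two directions require different techniques. For \emph{full exceptional collection $\Rightarrow$ semisimple quantum cohomology}, I would build a formal non-commutative Hodge structure from the Euler pairing of the collection and try to identify it, via a gauge transformation, with the actual extended Dubrovin connection, showing along the way that the resulting Frobenius algebra is generically semisimple in the spirit of Bayer--Manin and Hertling--Manin--Teleman reconstruction. For the harder direction \emph{semisimple quantum cohomology $\Rightarrow$ full exceptional collection}, the plan is to reconstruct the objects $E_i$ analytically: each canonical coordinate $u_i$ should correspond, via a Laplace-type transform of a flat section of the quantum connection, to an object of $\mathcal{D}^b(X)$, with the asymptotic-basis orthogonality $G(-z)^T G(z)=I$ translating into the vanishing $\mathrm{Hom}^*(E_j,E_k)=0$ for $j>k$.

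Once a candidate full exceptional collection $(E_1,\dots,E_n)$ is in hand, I would establish parts 2 and 3 simultaneously by computing $(S,C)$ at the chosen small-quantum point using the topological-enumerative solution $\mathcal Y_{\rm top}(z)$ from Proposition \ref{topologicalenumerativeprop}, which is singled out by the analytic criterion of Proposition \ref{topologicalenumerativecriterion}. Each column of $Y_R(z)$ would be computed by stationary-phase / Laplace-method asymptotics of an oscillating integral representation of flat sections around the critical point $u_i$; matching the resulting boundary constants against $\mathcal Y_{\rm top}(z)$ should express the $i$-th column of $C$ as $\widehat\Gamma_X^-\cup\mathrm{Ch}(E_i)$ up to a linear operator commuting with $c_1(X)\cup$, which is exactly the claim of part 3. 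Part 2 then follows from the constraint $S=C^{-1}e^{-\pi iR}e^{-\pi i\mu}\eta^{-1}(C^T)^{-1}$ combined with the Hirzebruch--Riemann--Roch identification of $\chi(E_j,E_k)$ as a Gamma-twisted Chern-character pairing. Any remaining ambiguity in the choice of collection can be absorbed by invoking that all full exceptional collections on a Fano variety are mutation-equivalent and that mutations correspond, under this dictionary, to the braid group action on $(S,C)$ recalled in Section \ref{sec:2}; verifying the statement for a single reference collection thus suffices.

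The main obstacle is clearly the only-if direction of part 1: there is no known general mechanism for extracting the triangulated-categorical structure of $\mathcal D^b(X)$ from purely analytic monodromy of $QH(X)$ outside of a homological mirror symmetry statement for $X$. The Laplace-transform construction sketched above produces at best a collection of formal objects in a Fukaya-type category on the mirror, and upgrading these to coherent sheaves on $X$ genuinely requires HMS-level input. This explains why the conjecture has so far been settled only case by case---projective spaces, Grassmannians, Hirzebruch surfaces, quadrics, and, as the present paper will add, $LG(2,4)$---and why a general proof along the lines above would at best be conditional on substantial categorical input beyond the analytic framework recalled in Section \ref{sec:2}.
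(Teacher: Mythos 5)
The item you were asked to prove is Conjecture~\ref{originalconjecture}, which is \emph{not} a theorem of this paper: it is Dubrovin's 1998 ICM conjecture, cited here from \cite{Dub98Geometry} purely as background, and it remains open in general. The paper itself offers no proof of it; what the paper actually proves is Theorem~\ref{maintheorem}, i.e.\ the \emph{refined} Conjecture~\ref{refinedconjecture} in the single case $X=LG(2,4)$, and it does so by direct computation of the monodromy data $(\mu,R,S,C)$ at $q=1$ and explicit matching against the Gamma-class/Chern-character matrix of a specific full exceptional collection (after a braid-group action and a sign change). Your write-up in fact recognizes this: you correctly flag that the only-if direction of part~1 has no known mechanism short of homological mirror symmetry, and you correctly observe that the conjecture has only been settled case by case, which is precisely the status quo reflected in the paper's introduction. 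So there is no gap in your understanding, but there is also nothing to compare: you have sketched a plausible research program rather than a proof, and the paper contains no proof of the general conjecture either. If the intent was to engage with the mathematics the paper actually establishes, the object to prove would have been Theorem~\ref{maintheorem} (or one of the concrete computational lemmas leading to it), not Conjecture~\ref{originalconjecture}. One small substantive caution about your sketch: the claim that all full exceptional collections on a Fano variety are mutation-equivalent (which you invoke to absorb the ambiguity in the choice of collection) is itself an open conjecture in general, so that step, like part~1, cannot be treated as available input.
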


In \cite{Dubrovin13}, Dubrovin suggested that the matrix $C'$ should be given by using the Gamma class of $X$. In \cite{cotti2018helix}, Cotti, Dubrovin and Guzzetti proposed a refinement of Conjecture \ref{originalconjecture}. 

\begin{conjecture}[\cite{cotti2018helix}]\label{refinedconjecture}
    Assume that $X$ is a Fano variety of dimension $D$ with odd-vanishing cohomology and that the power series $F_0(\textbf{v})$ has a convergence domain $\Omega$. Then
\begin{enumerate}
    \item [1.] the quantum cohomology of $X$ is semisimple if and only if $\mathcal{D}^b(X)$ admits a full exceptional collection. 
\end{enumerate}
If the quantum cohomology of $X$ is semisimple, then there exists a full exceptional collection $(E_1,\dots, E_n)$ of $\mathcal{D}^b(X)$ such that: 
\begin{enumerate}
     \setcounter{enumi}{+1}
    \item [2.] the Stokes matrix $S$ is equal to the inverse of the Euler matrix $\left(\chi(E_j,E_k)\right)_{1\leq j,k \leq n}$;
    \item [3.] the central connection matrix $C$, connecting the solution $Y_{R}$ (see \eqref{YLRasymptotic}) with the topological-enumerative solution $Y_{\rm top}=\Psi\cdot \mathcal{Y}_{\rm top}$ of Proposition \ref{topologicalenumerativeprop}, is equal to the matrix whose columns are given by the coordinates of 
    \begin{equation*}
        \frac{i^{\bar D}}{(2\pi)^\frac{D}{2}}\widehat\Gamma^-_X\cup\exp(-\pi i c_1(X))\cup{\rm Ch}(E_k), \quad 1\leq k \leq n.  
    \end{equation*}
    Here $\bar D = D\ \text{mod}\ 2$. 
\end{enumerate}
\end{conjecture}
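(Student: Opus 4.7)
The plan is to verify each of the three parts of Conjecture \ref{refinedconjecture} directly for $X=LG(2,4)$ by explicit computation, using the small quantum cohomology as the base point. Since $\dim_{\mathbb C} H^*(LG(2,4);\mathbb C)=4$ (with Schubert basis indexed by strict partitions in a $2\times 2$ box), every matrix in the problem is $4\times 4$, so a hands-on attack is feasible.

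First, I would write down the small quantum multiplication of $LG(2,4)$ using the Kresch--Tamvakis presentation, and form the matrix $\mathcal U$ of quantum multiplication by the Euler field at a base point of the form $(0,v^2,0,0)$ with $e^{v^2}$ a convenient parameter. A short computation gives the four canonical coordinates $u_1,\dots,u_4$ (the eigenvalues of $\mathcal U$); in particular these are pairwise distinct, which proves the ``only if'' half of part 1 by exhibiting a semisimple point. The ``if'' direction is supplied by the full exceptional collections constructed by Samokhin and Fonarev. I would then diagonalize $\mathcal U$ and record the transition matrix $\Psi$ to the idempotent basis normalized as in \eqref{psitransition}, and choose an admissible line $\ell$ avoiding the resulting Stokes rays $R_{ij}$.

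Next comes the monodromy data at $z=0$. The matrices $\mu$ and $R$ are read off directly from the cohomological grading and from $c_1(LG(2,4))\cup$; the nontrivial point is to pin down the calibration so that ``the'' central connection matrix is the one of Conjecture \ref{refinedconjecture}. For this I would invoke Proposition \ref{topologicalenumerativecriterion}: the topological-enumerative solution $\mathcal Y_{\rm top}(z)=\Phi_{\rm top}(z)z^\mu z^R$ is characterized by the holomorphy and initial-value condition of $H(z)=z^{-\mu}\Phi_{\rm top}(z)z^\mu$ at $z=0$. Solving \eqref{odedubrovin} as a formal series in $z$ around $z=0$ and imposing Proposition \ref{topologicalenumerativecriterion} singles out $\mathcal Y_{\rm top}$, which will be truncated to a polynomial of small degree because of the Fano condition and the structure of $\mu$ on a 4-dimensional space.

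The heart of the proof is the computation of the Stokes matrix $S$ and the central connection matrix $C$ with respect to $\ell$. I would follow Dubrovin's recipe: conjugate \eqref{odedubrovin} by $\Psi$ to bring it to the canonical form \eqref{Yeqcanonical}, construct the unique formal solution $Y_{\rm formal}(z)=G(z)e^{zU}$ with $G(\infty)=I$, and compute the actual analytic solutions $Y_{L/R}$ on the sectors $\Pi_{\rm left/right}$ as Laplace-type integrals along $\ell$. Since $LG(2,4)\cong Q^3$ this ODE has a classical hypergeometric structure, so one can obtain $Y_R$ in closed form through Meijer $G$-functions (or equivalently generalized hypergeometric integrals along appropriate cycles), and read off $S$ from the connection between $Y_L$ and $Y_R$ on $\Pi_+$, and $C$ from the connection $Y_R=Y_{\rm top}\,C$. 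This is the step I expect to be the main obstacle: one must track the correct determinations of $z^\mu z^R$, the right branches of the integrals, and the explicit entries of $\Psi$, all simultaneously; the Gamma-function identities used to rewrite the resulting integrals need to be arranged so that the answer comes out as a product of $\widehat\Gamma^-_X$, $\exp(-\pi i c_1(X))$, and a Chern character.

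Finally, on the derived-category side I would fix the Kapranov-type full exceptional collection $(\mathcal O,\Sigma,\mathcal O(1),\mathcal O(2))$ on $Q^3$ (where $\Sigma$ is the spinor bundle), compute its Gram matrix $(\chi(E_j,E_k))$ and Chern characters, evaluate the vectors
\begin{equation*}
\tfrac{i^{\bar D}}{(2\pi)^{D/2}}\,\widehat\Gamma^-_X\cup\exp(-\pi i c_1(X))\cup{\rm Ch}(E_k),\qquad 1\le k\le 4,
\end{equation*}
in the Schubert basis, and match these against the column computation of $C$ and against $S^{-1}$. If the natural ordering of canonical coordinates along $\ell$ does not match the ordering of the exceptional collection, I would apply a braid $\beta\in\mathcal B_4$ as in the action recalled above to reorder $(u_1,\dots,u_4)$ and transport $(S,C)$ accordingly; by Dubrovin's theorem this leaves the Dubrovin--Frobenius structure invariant, so one only needs to verify that after a single explicit braid the matching becomes exact. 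Assembling these pieces gives parts 2 and 3 of Conjecture \ref{refinedconjecture} for $LG(2,4)$, and combined with semisimplicity and the Samokhin--Fonarev full exceptional collection this completes the proof of Theorem \ref{maintheorem2}.
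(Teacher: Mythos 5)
Your plan is essentially the paper's proof: choose the base point $q=1$ in the small quantum cohomology, reduce the deformed flatness equation to a scalar fourth-order ODE, solve it by Mellin--Barnes/Meijer-$G$ type integrals, read off the Stokes matrix from asymptotics in sectors, read off the central connection matrix from the $z\to0$ expansion against the topological-enumerative solution, compute the Euler matrix and Gamma-twisted Chern characters of an exceptional collection, and close the gap with a braid. The paper uses the twisted Fonarev collection $\bigl(\mathcal O\otimes\wedge^2\mathcal U^*,\ \mathcal O(1)\otimes\wedge^2\mathcal U^*,\ \Sigma^{(2,1)}\mathcal U^*\otimes\wedge^2\mathcal U^*,\ \mathcal O(2)\otimes\wedge^2\mathcal U^*\bigr)$ on $LG(2,4)$ rather than your spinor-bundle collection on $Q^3$, and the final matching is by $\beta_{23}^{-1}$ followed by a sign change $\mathrm{diag}(1,-1,-1,1)$ (you should allow for sign changes as well as braids; they are part of the ambiguity of the monodromy data). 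These are cosmetic differences.

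One concrete error in your outline: $\Phi_{\rm top}(z)$ is \emph{not} a polynomial and does not truncate. The Fano condition and the finite-dimensionality of $H^*$ bound the number of $\beta$'s contributing at each fixed order in $z$, but the descendent insertions $c_1(\mathcal L_1)^l$ produce an honest infinite power series in $z$, as the paper's explicit expansion (carried through $z^7$ with an $O(z^8)$ tail) makes clear. What saves the computation of $C$ is not a truncation of $\Phi_{\rm top}$ but the structure of $z^\mu z^R$: in the present case the leading block $z^{3/2}(\log z)^j$, $j=0,\dots,3$, already determines the four unknown entries in each column of $C$, so only the first couple of orders of $\Phi_{\rm top}$ enter, with higher orders serving only as consistency checks. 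If you wrote the proof as proposed, the claim ``will be truncated to a polynomial'' would be false as stated, but the computation would nonetheless go through once this is replaced by the observation that finitely many orders suffice to pin down $C$.
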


\begin{remark}
      In \cite{cotti2018helix}, Cotti, Dubrovin and Guzzetti proved that part 3 of Conjecture \ref{refinedconjecture} is equivalent to the Gamma conjecture II proposed by Galkin, Golyshev and Iritani \cite{Galkin2014gammaclasses}.
\end{remark}

\section{Proof of the refined Dubrovin conjectrue for $LG(2,4)$}\label{sec:4}

\subsection{Small quantum cohomology of $LG(2,4)$}
It is known that the Lagrangian Grassmanian $LG(2,4)$ is isomorphic to the 3-dimensional quadric. 
The cohomology ring $H^*(LG(2,4);\mathbb{C})$ has the following ring presentation (cf. \cite{borel1953cohomologie})
\begin{equation}\label{LG24cohomology}
    H^*(LG(2,4);\mathbb{C}) \cong \mathbb{C}[x_1,x_2]/\left\langle x_1^2 - 2 x_2,  x_2^2 \right\rangle.
\end{equation}

From the general theory of Schubert calculus \cite{bernstein1973schubert}, it is known that $H^*(LG(2,4);\mathbb{C})$ is a complex linear space of dimension 4 and a basis is given by {\it Schubert classes}: 
\begin{equation}\label{LG24schubertclass}
    \sigma_0:=1,  \sigma_1,  \sigma_2,  \sigma_{2,1}. 
\end{equation}
We denote them by $e_1:=\sigma_0, e_2:=\sigma_1, e_3:=\sigma_2, e_4:=\sigma_{2,1}$, and denote by $v^i$ the coordinate with respect to $e_i$. Then the coordinates in the small quantum cohomology are
\begin{equation*}
    \textbf{v}=(0,v^2,0,0). 
\end{equation*}
The corresponding {\it Schubert polynomials} are
    \begin{equation}\label{LG24Schubertpoly}
    \begin{aligned}
        &\mathfrak{S}_{0} = 1 , 
     \\ &\mathfrak{S}_{1} = x_1 ,\  \mathfrak{S}_{2}= \frac{1}{2}x_1^2 , 
     \\ &\mathfrak{S}_{2,1}=\frac{1}{2}x_1^3, 
    \end{aligned}
    \end{equation}
which are images of the Schubert classes under the isomorphism \eqref{LG24cohomology}. 
The first Chern class of $LG(2,4)$ is $c_1=3\sigma_1$. The matrix of the multiplication $c_1\cup$ under the basis \eqref{LG24schubertclass} is
\begin{equation}
   R = \left(
\begin{array}{cccc}
 0 & 0 & 0 & 0 \\
 3 & 0 & 0 & 0 \\
 0 & 6 & 0 & 0 \\
 0 & 0 & 3 & 0 \\
\end{array}
\right). 
\end{equation}
The matrix of Poincar{\'e} pairing 
\begin{equation}
    \eta(\alpha,\beta):=\int_{LG(2,4)}\alpha \wedge \beta
\end{equation}
is 
\begin{equation}
   \eta = \left(
\begin{array}{cccc}
 0 & 0 & 0 & 1 \\
 0 & 0 & 1 & 0 \\
 0 & 1 & 0 & 0 \\
 1 & 0 & 0 & 0 \\
\end{array}
\right). 
\end{equation}

The small quantum cohomology ring $qH^*(LG(2,4);\mathbb{C})$ has the following ring presentation \cite{kresch2003quantum}: 
\begin{equation}\label{LG24quantumcohomology}
    qH^*(LG(2,4);\mathbb{C}) \cong \mathbb{C}[x_1,x_2,q]/\left\langle x_1^2 - 2 x_2, -q x_1 + x_2^2 \right\rangle, 
\end{equation}
where $q=e^{v^2}$. 
Images of the Schubert classes under the isomorphism \eqref{LG24quantumcohomology} are \cite{kresch2003quantum}
    \begin{equation}\label{LG24qSchubertpoly}
    \begin{aligned}
        &\mathfrak{S}_{0}^q = 1 , 
     \\ &\mathfrak{S}_{1}^q = x_1 ,\  \mathfrak{S}_{2}^q= \frac{1}{2}x_1^2 , 
     \\ &\mathfrak{S}_{2,1}^q=\frac{1}{2}x_1^3-q. 
    \end{aligned}
    \end{equation}

The multiplication table of the small quantum cohomology of $LG(2,4)$ is shown in \autoref{multitable}. 
\begin{table}[]
\renewcommand\arraystretch{1.3}
\begin{tabular}{|c|c|c|c|}
\hline
$\cdot$        & $\sigma_1$        & $\sigma_2$        & $\sigma_{2,1}$ \\ \hline
$\sigma_1$     & $2 \sigma_2$      & $q+ \sigma _{2,1}$ & $q \sigma_1$   \\ \hline
$\sigma_2$     & $q+\sigma _{2,1}$ & $q \sigma_1$      & $q \sigma_2$   \\ \hline
$\sigma_{2,1}$ & $q \sigma_1$      & $q \sigma_2$      & $q^2$          \\ \hline
\end{tabular}
\vspace{5pt}
\caption{\label{multitable}Multiplication table of the small quantum cohomology of $LG(2,4)$}
\end{table}


It is known that $q=1$ (i.e. $v^i=0$, $1\leq i \leq 4$) is a semisimple point of the small quantum cohomology of $LG(2,4)$ \cite{Perrin2010quantumminusculeIII}. Then according to a result of Cotti \cite{cotti2021degenerate} we know that $F_0(\bf v)$ (see \eqref{F0v}) has a convergence domain.


\subsection{Deformed flat connection}


 At the point $q=1$, equation \eqref{odedubrovin} becomes
\begin{equation}\label{LG24ODEsystem}
    \begin{aligned}
    \dfrac{dy}{dz}=\left(\mathcal{U}+\dfrac{\mu}{z}\right)y, 
    \end{aligned}
\end{equation}
where
\begin{equation}
   \mathcal{U} = \left(
\begin{array}{cccc}
 0 & 0 & 3 & 0 \\
 3 & 0 & 0 & 3 \\
 0 & 6 & 0 & 0 \\
 0 & 0 & 3 & 0 \\
\end{array}
\right)
\end{equation} 
and 
\begin{equation}
   \mu = \left(
\begin{array}{cccc}
 -\frac{3}{2} & 0 & 0 & 0 \\
 0 & -\frac{1}{2} & 0 & 0 \\
 0 & 0 & \frac{1}{2} & 0 \\
 0 & 0 & 0 & \frac{3}{2} \\
\end{array}
\right).
\end{equation}

\begin{proposition}\label{LG24ODEsystemProp}
    The system \eqref{LG24ODEsystem} can be reduced to an equivalent scalar ODE
    \begin{equation}\label{scalarODE}
    D^4\varphi-108  z^3 D\varphi-162 z^3\varphi=0, 
    \end{equation}
    where $D = z \frac{d}{dz}$.
\end{proposition}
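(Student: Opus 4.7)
The plan is to reduce the $4\times 4$ first-order system to a single scalar fourth-order ODE by solving successively for three of the unknowns in terms of the fourth and substituting into the remaining equation. Writing $D = z\,d/dz$, the system $dy/dz = (\mathcal U + \mu/z)\, y$ takes the operator form
\begin{align*}
(D + 3/2)\, y^1 &= 3z\, y^3, \\
(D + 1/2)\, y^2 &= 3z\, y^1 + 3z\, y^4, \\
(D - 1/2)\, y^3 &= 6z\, y^2, \\
(D - 3/2)\, y^4 &= 3z\, y^3.
\end{align*}
The shape of the target equation, which has no lower-order derivative terms, together with the fact that the largest eigenvalue of $\mu$ equals $3/2$, suggests the gauge choice $\varphi := z^{-3/2} y^4$, so that $y^4 = z^{3/2}\varphi$ and the dominant scaling of $y^4$ at $z=0$ is absorbed.

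The only algebraic tool needed is the intertwining identity $P(D)\circ z^a = z^a\circ P(D+a)$, valid for any polynomial $P$ and constant $a$. I would apply it repeatedly, moving up the system: the fourth equation gives $y^3 = \tfrac{1}{3}\, z^{1/2}\, D\varphi$; the third then yields $y^2 = \tfrac{1}{18}\, z^{-1/2}\, D^2\varphi$; and the second produces $y^1 = \tfrac{1}{54}\, z^{-3/2}\, D^3\varphi - z^{3/2}\varphi$. Plugging these into the first equation and using $(D+3/2)z^{-3/2} = z^{-3/2} D$ and $(D+3/2)z^{3/2} = z^{3/2}(D+3)$ gives
\[
\tfrac{1}{54}\, z^{-3/2}\, D^4\varphi \;-\; z^{3/2}(D+3)\varphi \;=\; z^{3/2}\, D\varphi,
\]
which, after multiplying through by $54\, z^{3/2}$ and collecting, is exactly $D^4\varphi - 108\, z^3\, D\varphi - 162\, z^3\varphi = 0$.

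I do not anticipate a serious obstacle. Once the gauge $\varphi = z^{-3/2} y^4$ is fixed, each of the four steps above is a routine use of the intertwining identity, and the half-integer exponents in $\mu$ conspire so that only integer powers of $z$ survive in the final equation; indeed, the cancellation of the $z^{-3/2}$ factor in the very last step is what confirms that the gauge was the right one. The one point deserving a brief remark is the equivalence of the scalar ODE with the original system: this is automatic because the reduction is reversible, any solution $\varphi$ of the scalar equation determining $(y^1,y^2,y^3,y^4)$ through the explicit formulas above, which a direct substitution shows satisfy all four of the original equations.
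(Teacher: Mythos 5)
Your proof is correct and takes essentially the same approach as the paper: set $y_4 = z^{3/2}\varphi$, solve the last three equations of the system successively for $y_3, y_2, y_1$ in terms of $\varphi$, and substitute into the first equation to obtain the scalar ODE. The paper merely states the resulting expressions for $y_1, y_2, y_3$ (in ordinary-derivative rather than $D$ form), whereas you spell out the intermediate steps via the intertwining identity $P(D)\circ z^a = z^a\circ P(D+a)$; this is a cleaner presentation of the same computation.
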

\begin{proof}
    Let $y=(y_1,\dots,y_4)^T$ and $y_4(z)=z^{\frac{3}{2}} \varphi(z)$. From the system \eqref{LG24ODEsystem}, $y_1,y_2,y_3$ are uniquely determined by $\varphi$ through the following equations: 
    \begin{equation*}
    \begin{aligned}
     y_1(z)&=\frac{z^2 \varphi^{(3)}(z)+\varphi'(z)+3 z \varphi''(z)-54 z^2 \varphi(z)}{54 \sqrt{z}},
\\    y_2(z)&=\frac{1}{18} \left(z^{\frac{3}{2}} \varphi''(z)+\sqrt{z} \varphi'(z)\right),
\\    y_3(z)&=\frac{1}{3} z^{\frac{3}{2}} \varphi'(z),
\end{aligned}
\end{equation*}
where $\varphi$ satisfies the scalar ODE \eqref{scalarODE}. 
\end{proof}

The indicial equation of \eqref{scalarODE} at $z=0$ reads
\begin{equation}
    r^4=0. 
\end{equation}
A solution of \eqref{scalarODE} is (cf. \cite{coates2016quantumperiods})
\begin{equation}
    \tilde{\varphi}(z)= \sum^{\infty}_{d=0} \dfrac{(2d)!}{(d!)^5}z^{3d}=1 + 2 z^3 + \frac {3} {4}z^6 + \frac {5 } {54}z^9 + \frac {35 } {6912}z^{12} + \frac {7 } {48000}z^{15}+O(z^{18}).\label{LG24quantumperiod}
\end{equation}

By Proposition \ref{topologicalenumerativeprop}, the topological-enumerative solution of \eqref{LG24ODEsystem} is 
\begin{equation}\label{leveltform}
    \mathcal{Y}_{\text{top}}(z) = \Phi_{\text{top}}(z) z^{\mu}z^{R},
\end{equation}
where 
\begin{equation}
\begin{aligned}
    \Phi_{\text{top}}(z) &= I +  \left(
\begin{array}{cccc}
 0 & 0 & 1 & 0 \\
 0 & 0 & 0 & 1 \\
 0 & 0 & 0 & 0 \\
 0 & 0 & 0 & 0 \\
\end{array}
\right) z+ \left(
\begin{array}{cccc}
 0 & -2 & 0 & 0 \\
 0 & 0 & 0 & 0 \\
 0 & 0 & 0 & 2 \\
 0 & 0 & 0 & 0 \\
\end{array}
\right) z^2+\left(
\begin{array}{cccc}
 2 & 0 & 0 & 1 \\
 0 & -2 & 0 & 0 \\
 0 & 0 & -2 & 0 \\
 0 & 0 & 0 & 2 \\
\end{array}
\right) z^3
\\ &+\left(
\begin{array}{cccc}
 0 & 0 & -\frac{3}{2} & 0 \\
 4 & 0 & 0 & \frac{3}{2} \\
 0 & 0 & 0 & 0 \\
 0 & 0 & -4 & 0 \\
\end{array}
\right)z^4
+\left(
\begin{array}{cccc}
 0 & \frac{3}{2} & 0 & 0 \\
 0 & 0 & -\frac{7}{2} & 0 \\
 8 & 0 & 0 & \frac{3}{2} \\
 0 & 8 & 0 & 0 \\
\end{array}
\right)z^5+\left(
\begin{array}{cccc}
 \frac{13}{4} & 0 & 0 & \frac{1}{2} \\
 0 & \frac{33}{4} & 0 & 0 \\
 0 & 0 & -\frac{17}{4} & 0 \\
 0 & 0 & 0 & \frac{3}{4} \\
\end{array}
\right)z^6
\\ &\qquad \qquad \qquad+\left(
\begin{array}{cccc}
 0 & 0 & -\frac{19}{12} & 0 \\
 -\frac{5}{2} & 0 & 0 & \frac{5}{12} \\
 0 & \frac{25}{2} & 0 & 0 \\
 0 & 0 & -\frac{5}{2} & 0 \\
\end{array}
\right)z^7+O(z^8). 
\end{aligned}
\end{equation}




\subsection{Computation of the Stokes matrix and the central connection matrix}
It suffices to compute the monodromy data at $q=1$ due to the simisimplicity. 
Let $\epsilon=e^{\frac{2}{3}i\pi}$. The canonical coordinates at $q=1$ are 
\begin{equation}\label{LG24eigenvalues}
    u_1=0,\ u_2=3\cdot 2^{\frac{2}{3}},\  u_3=3\cdot 2^{\frac{2}{3}}\epsilon^{2},\  u_4=3\cdot 2^{\frac{2}{3}}\epsilon.  
\end{equation}
The Stokes rays (see \eqref{stokesrays}) are 
\begin{equation*}
\begin{aligned}
        R_{12}&=\left\{i\rho: \rho\geq0\right\},\ R_{13}=\left\{-\rho e^{i \frac{\pi}{6}}: \rho\geq0\right\},\ R_{14}=\left\{\rho e^{-i \frac{\pi}{6}}: \rho\geq0\right\}, 
    \\  R_{23}&=\left\{-\rho e^{i \frac{\pi}{3}}: \rho\geq0\right\},\ R_{24}=\left\{\rho e^{-i \frac{\pi}{3}}: \rho\geq0\right\},\ 
    \\  R_{34}&=\left\{\rho: \rho\geq0\right\}. 
\end{aligned}
\end{equation*}

\begin{figure}
    \centering
    \label{fig:enter-label}
\begin{tikzpicture}
  \filldraw[fill=gray!5!white, draw=gray!50!black] (0,0) -- (0.866025,0.5) arc [start angle=30, end angle=240, radius=1];
  \filldraw[fill=gray!40!white, draw=black!50!black] (0,0) -- (-1.29904,-0.75) arc [start angle=-150, end angle=60, radius=1.5];
  \filldraw[fill=gray!80!white, draw=black!50!black] (0,0) -- (1.73205,1) arc [start angle=30, end angle=60, radius=2]  node[near start,above=5pt] {$\Pi_{+}$};
  \filldraw[fill=gray!80!white, draw=black!50!black] (0,0) -- (-1.73205,-1) arc [start angle=-150, end angle=-120, radius=2] node[near start,below=5pt] {$\Pi_{-}$};
  \draw[->] (-2.5,0) -- (2.5,0);
  \draw[->] (0,-2.5) -- (0,2.5);
  \draw (-2.3,-2.3) -- (2.3,2.3) node[above] {$\ell$};
  \draw (-2.16506,-1.25) -- (2.16506,1.25)  node[below=5pt, font=\tiny] {$R_{31}$}  node[very near start, left=10pt,font=\tiny] {$R_{13}$} ;
  \draw (-1.25,-2.16506) -- (1.25,2.16506)  node[very near end, above=13pt,font=\tiny] {$R_{32}$} node[very near start, below=13pt,font=\tiny] {$R_{23}$};
  \draw[<->] (0.866025,0.5) arc [start angle=30, end angle=240, radius=1] node[midway,above=10pt] {$\Pi_{\text{left}}$};
  \draw[<->] (-1.29904,-0.75) arc [start angle=-150, end angle=60, radius=1.5] node[midway,below=10pt] {$\Pi_{\text{right}}$};
\end{tikzpicture}
  \caption{The admissilbe line, Stokes rays and sectors}
\end{figure}
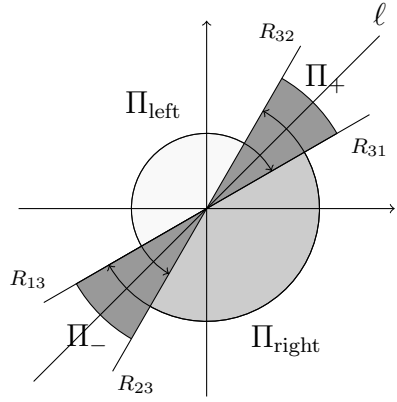

\noindent We fix the admissible line $\ell$: 
\begin{equation*}
     \ell=\left\{\rho e^{i \frac{\pi}{4}}: \rho\in \mathbb{R}\right\}. 
\end{equation*}
Let 
\begin{equation*}
    \Pi_{\text{left}}=\left\{z:\frac{\pi}{6} < \text{arg}z < \frac{4\pi}{3}\right\},\quad \Pi_{\text{right}}=\left\{z:-\frac{5\pi}{6}< \text{arg}z < \frac{\pi}{3}\right\}. 
\end{equation*}
The narrow sectors are then given by
\begin{equation*}
    \Pi_{+}=\left\{z:\frac{\pi}{6}< \text{arg}z <\frac{\pi}{3}\right\}, 
    \quad \Pi_{-}=\left\{z:-\frac{\pi}{6}< \text{arg}z <\frac{\pi}{3}\right\}.
\end{equation*}

\subsubsection{Solutions to the scalar ODE \eqref{scalarODE}}
\begin{lemma}
    The following functions are solutions of the ODE \eqref{scalarODE}: 
\begin{itemize}
    \item the function
\begin{equation}\label{sol1}
    \varphi_1(z)=\int_{\Lambda_1} \frac{\Gamma (s)^4}{\Gamma \left(s+\frac{1}{2}\right)}2^{-2 s} e^{\pi  i s} z^{-3 s} ds, \quad \ -\frac{\pi}{2}< \textup{arg} z<\frac{\pi}{2}, 
\end{equation}
where $\Lambda_1$ is any line in the complex plane from the point $\kappa - i \infty$ to $\kappa + i \infty$ for any $\kappa>0$; 

    \item the function 
\begin{equation}\label{sol2}
   \varphi_2(z)=\int_{\Lambda_2}  \Gamma (s)^4 \Gamma \left(\frac{1}{2}-s\right) 2^{-2 s} z^{-3 s} ds, \quad \ -\frac{5\pi}{6}< \textup{arg} z<\frac{5\pi}{6},
\end{equation}
where $\Lambda_2$ is any line in the complex plane from the point $\kappa - i \infty$ to $\kappa + i \infty$ for any $0<\kappa<\frac{1}{2}$. 
\end{itemize}
\end{lemma}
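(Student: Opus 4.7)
The plan is to treat each $\varphi_i(z)$ as a Mellin-Barnes integral $\int_{\Lambda_i} g_i(s)\, z^{-3s}\, ds$, apply the operator $L := D^4 - 108 z^3 D - 162 z^3$ term by term, and reduce the resulting identity to a $\Gamma$-function recurrence after a contour shift.

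First I would establish absolute convergence of the two integrals (and of their termwise derivatives) on the stated sectors. With
\begin{equation*}
g_1(s) = \frac{\Gamma(s)^4}{\Gamma(s+\tfrac{1}{2})}\, 2^{-2s} e^{\pi i s}, \qquad g_2(s) = \Gamma(s)^4\, \Gamma(\tfrac{1}{2}-s)\, 2^{-2s},
\end{equation*}
Stirling's estimate $|\Gamma(\sigma + it)| \sim \sqrt{2\pi}\,|t|^{\sigma - 1/2} e^{-\pi|t|/2}$ gives $|g_i(\kappa + it)| \sim C\, |t|^{3\kappa - 2}\, e^{-\lambda_i |t|}$ along the contour $\Lambda_i$, and together with the factor $|z^{-3s}| = |z|^{-3\kappa} e^{3\, \mathrm{Im}(s)\, \arg z}$ this reduces absolute convergence to a linear inequality on $\arg z$. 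Uniform decay on compact subsets of the stated sector also permits differentiation under the integral sign.

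Since $D z^{-3s} = -3s\, z^{-3s}$, applying $L$ term by term yields
\begin{equation*}
L \varphi_i(z) = 81 \int_{\Lambda_i} s^4 g_i(s)\, z^{-3s}\, ds + 162 \int_{\Lambda_i} (2s - 1) g_i(s)\, z^{3 - 3s}\, ds.
\end{equation*}
The substitution $s \mapsto s+1$ turns the second integral into $162 \int_{\Lambda_i - 1} (2s+1) g_i(s+1)\, z^{-3s}\, ds$, and I would then deform the shifted contour $\Lambda_i - 1$ back to $\Lambda_i$. This is where the main subtlety lies: one must check that $(2s+1) g_i(s+1)$ has no pole in the strip $\kappa - 1 \leq \mathrm{Re}(s) \leq \kappa$. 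For $g_1$ the poles of $g_1(s+1)$ sit at $s = -1, -2, \ldots$, all outside the strip because $\kappa > 0$. For $g_2$ the factor $\Gamma(-\tfrac{1}{2} - s)$ produces an additional simple pole at $s = -1/2$ which does lie in the strip, but it is exactly cancelled by the vanishing of $2s+1$ at that point, so the deformation is still legal.

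After the shift both integrals share the contour $\Lambda_i$, and it remains to verify the pointwise identity $s^4 g_i(s) + 2(2s+1) g_i(s+1) = 0$, equivalently $g_i(s+1)/g_i(s) = -s^4/(2(2s+1))$. This is a direct $\Gamma$-function check: for $g_1$ one applies $\Gamma(s+1) = s\, \Gamma(s)$ and $\Gamma(s+\tfrac{3}{2}) = (s+\tfrac{1}{2})\, \Gamma(s+\tfrac{1}{2})$ together with $e^{\pi i} = -1$; for $g_2$ one uses $\Gamma(\tfrac{1}{2} - s) = -(s+\tfrac{1}{2})\, \Gamma(-\tfrac{1}{2} - s)$. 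In both cases the ratio collapses to $-s^4 / (2(2s+1))$, completing the verification.
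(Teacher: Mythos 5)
Your proof is correct and is precisely the ``direct verification'' the paper outsources to Dubrovin's reference: differentiate under the integral, use $D z^{-3s} = -3s\,z^{-3s}$, shift the contour by one unit in $s$, and collapse to the $\Gamma$-recurrence $g_i(s+1)/g_i(s) = -s^4/(2(2s+1))$, with the simple pole of $g_2(s+1)$ at $s=-\tfrac12$ killed by the zero of $2s+1$. One small imprecision worth flagging: because of the factor $e^{\pi i s}$ in $g_1$, the decay of $|g_1(\kappa+it)|$ is not symmetric in $t$ (roughly $e^{-5\pi t/2}$ as $t\to+\infty$ but only $e^{\pi t/2}$ as $t\to-\infty$), so the symmetric bound $e^{-\lambda_1|t|}$ is not literally correct; this does not affect the argument, since exponential decay on each half-line separately already gives absolute convergence and justifies differentiation under the integral on a sector of positive angular width, which is all you use.
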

\begin{proof}
    This Lemma can be obtained via a direct verification (cf. \cite{dubrovin1999painleve}). 
\end{proof}

Now we consider asymptotic behaviours of $\varphi_1$ and $\varphi_2$ as $|z| \rightarrow \infty$ and several identities. 
\begin{lemma}
   The following asymptotic behaviours hold as $|z| \rightarrow \infty$ 
\begin{equation}
\begin{aligned}
    -\frac{z^{\frac{3}{2}}}{2\sqrt{2}\pi^2} \varphi_1(z)&= \frac{1}{\sqrt{6}} e^{u_4 z}\left(1+O\left(\dfrac{1}{z}\right)\right), \quad \textup{for}\ -\frac{\pi}{2}< \textup{arg} z<\frac{\pi}{2},
    \\   -\frac{z^{\frac{3}{2}}}{\sqrt{2} \pi^3} \varphi_2(z)&= -\frac{i}{\sqrt{2}} e^{u_1 z}\left(1+O\left(\dfrac{1}{z}\right)\right), \quad \textup{for}\ -\frac{\pi}{6}< \textup{arg} z<\frac{\pi}{2}.
\end{aligned}
\end{equation} 
\end{lemma}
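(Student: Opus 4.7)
The plan is to evaluate both Mellin--Barnes integrals by complex-analytic methods: residue calculus for $\varphi_2$, and the method of steepest descent for $\varphi_1$. Stirling's asymptotic formula for the Gamma functions is the main analytic tool in each case.

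For $\varphi_2(z)$, the integrand $\Gamma(s)^4\Gamma(\tfrac12-s)\,2^{-2s}z^{-3s}$ has, to the right of $\Lambda_2$, only the simple poles of $\Gamma(\tfrac12-s)$ at $s=\tfrac12+k$, $k\geq 0$. I would shift $\Lambda_2$ rightward past the pole at $s=\tfrac12$; the residue theorem gives
\[
\varphi_2(z)\;=\;-2\pi i\,\operatorname{Res}_{s=1/2}\!\bigl[\Gamma(s)^4\Gamma(\tfrac12-s)\,2^{-2s}z^{-3s}\bigr]\;+\;\int_{\Lambda_2'}\cdots\,ds,
\]
where $\Lambda_2'$ is a vertical line with $\tfrac12<\operatorname{Re}s<\tfrac32$. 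The residue equals $-\tfrac12\Gamma(\tfrac12)^4 z^{-3/2}=-\tfrac{\pi^2}{2}z^{-3/2}$, producing the leading contribution $\pi^3 i\,z^{-3/2}$. Stirling's bound $|\Gamma(s)|^4|\Gamma(\tfrac12-s)|=O(e^{-5\pi|t|/2})$ on $s=\sigma+it$, combined with $|z^{-3s}|=|z|^{-3\sigma}e^{3t\arg z}$, shows the remainder on $\Lambda_2'$ is $O(|z|^{-3\kappa'})$ for any $\kappa'<\tfrac32$, uniformly in the stated sector. Dividing by $-z^{3/2}/(\sqrt 2\,\pi^3)$ yields the asserted asymptotic (with error actually $O(z^{-3})$).

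For $\varphi_1(z)$, all poles of $\Gamma(s)^4/\Gamma(s+\tfrac12)$ lie at $s=0,-1,-2,\dots$, to the left of $\Lambda_1$, so the dominant behaviour must come from a saddle point. Using Stirling,
\[
\frac{\Gamma(s)^4}{\Gamma(s+\tfrac12)}\,2^{-2s}e^{\pi is}z^{-3s}\;\sim\;(2\pi)^{3/2}\,s^{-2}\,e^{\phi(s)},\qquad \phi(s):=3s\log\tfrac{s}{ae},\quad a:=2^{2/3}e^{-\pi i/3}z.
\]
The saddle lies at $s_0=a$; since $-e^{-\pi i/3}=e^{2\pi i/3}$, we have $\phi(a)=-3a=3\cdot 2^{2/3}e^{2\pi i/3}z=u_4 z$, and $\phi''(a)=3/a$. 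Deforming $\Lambda_1$ to the steepest descent contour through $s_0$ and applying the Gaussian approximation
\[
\int e^{\phi(s)}g(s)\,ds\;\sim\;g(s_0)\sqrt{\tfrac{2\pi}{-\phi''(s_0)}}\,e^{\phi(s_0)}
\]
with $g(s)=(2\pi)^{3/2}s^{-2}$, and fixing the branches of $a^{-2}$ and $1/\sqrt{-\phi''(a)}$ from $\arg a=-\tfrac{\pi}{3}+\arg z$, a direct computation produces
\[
\varphi_1(z)\;\sim\;-\tfrac{2\pi^2}{\sqrt 3}\,z^{-3/2}e^{u_4 z}\;=\;-\tfrac{2\sqrt 2\,\pi^2}{\sqrt 6}\,z^{-3/2}e^{u_4 z},
\]
which is exactly the claimed asymptotic after multiplication by $-z^{3/2}/(2\sqrt 2\,\pi^2)$.

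The main obstacle is the rigorous steepest descent deformation for $\varphi_1$: one must verify that for every $z$ in the sector $-\tfrac{\pi}{2}<\arg z<\tfrac{\pi}{2}$, the contour $\Lambda_1$ admits a continuous deformation to a steepest descent contour through $s_0=a$ without crossing the left-half-plane poles, that the tails decay exponentially faster than $e^{u_4 z}$, and that no competing saddle (e.g.\ from the conjugate Stirling regime in the lower half-plane) contributes a larger exponential. The determination of branches in the Gaussian prefactor is equally delicate for matching the constant $1/\sqrt 6$ with the correct sign. The $O(1/z)$ correction then follows from Watson's lemma applied in the steepest descent variable.
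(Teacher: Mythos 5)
Your approach is consistent with what the paper gestures at (``Laplace method, cf.\ \cite{dubrovin1999painleve}''), and your computations check out. For $\varphi_2$ the shift across $s=\tfrac12$ gives $\operatorname{Res}_{s=1/2}=\Gamma(\tfrac12)^4\cdot(-1)\cdot 2^{-1}z^{-3/2}=-\tfrac{\pi^2}{2}z^{-3/2}$, hence $\varphi_2(z)=\pi^3 i\,z^{-3/2}+O(|z|^{-9/2+\varepsilon})$, exactly as you say. For $\varphi_1$ the Stirling exponent reduces to $\phi(s)=3s\log\bigl(s/(ae)\bigr)$ with $a=2^{2/3}e^{-\pi i/3}z$, the saddle value is $\phi(a)=-3a=u_4z$, and carrying the phases through ($\arg a^{-2}=2\pi/3$, descent direction $e^{i\pi/3}$, $e^{2\pi i/3}e^{i\pi/3}=-1$) yields precisely $\varphi_1(z)\sim-\tfrac{2\pi^2}{\sqrt 3}z^{-3/2}e^{u_4z}$, which matches the claimed $1/\sqrt 6$ after normalization.

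Your flagged obstacle is the genuine one, and it is not merely a formality: for $\arg z\in(-\tfrac\pi2,-\tfrac\pi6)$ one has $\arg a\in(-\tfrac{5\pi}{6},-\tfrac\pi2)$, so the saddle lies in the left half-plane and any deformation of $\Lambda_1$ to the steepest-descent contour does cross poles of $\Gamma(s)^4$ at $s=0,-1,-2,\dots$, with more of them swept up as $|z|\to\infty$. The standard resolution, which your outline should make explicit, is that on exactly this sub-sector $\operatorname{Re}(u_4z)>0$, so the saddle contribution $z^{-3/2}e^{u_4z}$ grows exponentially while each crossed residue contributes only $z^{3n}$ times a polynomial in $\log z$; the residues are therefore exponentially negligible and the asymptotic survives. (Equivalently: establish the asymptotic by the clean saddle argument on $(-\tfrac\pi6+\delta,\tfrac\pi2)$, where $\operatorname{Re}(a)>0$, and then extend it across the anti-Stokes ray $\arg z=-\tfrac\pi6$ by the sectorial uniqueness of asymptotic solutions to \eqref{Yeqcanonical}.) Likewise, the worry about the conjugate Stirling regime in the lower half-plane is handled by noting that $\Gamma(s)^4/\Gamma(s+\tfrac12)$ obeys the same Stirling estimate uniformly for $|\arg s|\le\pi-\delta$, and $\arg a$ stays bounded away from $-\pi$ on the closed sector. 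With these two points filled in, your proposal is a correct and essentially complete proof; it simply supplies the detail that the paper leaves to the reader.
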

\begin{proof}
   The proof of this Lemma is an elementary exercise by using Laplace method (cf. \cite{dubrovin1999painleve}).
\end{proof}

\begin{lemma}
   Let $\epsilon=e^{\frac{2}{3}i\pi}$. The functions $\varphi_1, \varphi_2$ satisfy the following identity
    \begin{equation}\label{euler}
           \varphi_2(z\epsilon^{-1})= 2\pi \varphi_1(z)-\varphi_2(z), \quad \textup{for}\ -\frac{\pi}{6}< \textup{arg} z<\frac{\pi}{2}. 
    \end{equation}
\end{lemma}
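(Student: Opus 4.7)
The plan is to rewrite the integrand of $\varphi_1$ using the reflection formula for the Gamma function so that it matches the shape of the $\varphi_2$ integrand, and then combine the three integrals on a common vertical contour in the strip $0<\operatorname{Re} s<1/2$.

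First I would invoke the identity
\begin{equation*}
\Gamma(s+\tfrac12)\Gamma(\tfrac12-s)=\frac{\pi}{\cos(\pi s)},
\end{equation*}
which gives $1/\Gamma(s+\tfrac12)=\cos(\pi s)\,\Gamma(\tfrac12-s)/\pi$. Substituting this into \eqref{sol1} and using $2\cos(\pi s)e^{\pi is}=1+e^{2\pi is}$ yields
\begin{equation*}
2\pi\,\varphi_1(z)=\int_{\Lambda_1}\Gamma(s)^4\,\Gamma(\tfrac12-s)\,2^{-2s}\bigl(1+e^{2\pi is}\bigr)z^{-3s}\,ds.
\end{equation*}
Since $1/\Gamma(s+\tfrac12)$ is entire, the integrand of $\varphi_1$ only has poles of $\Gamma(s)^4$ at $s\in\mathbb{Z}_{\le0}$, so the contour $\Lambda_1$ may be deformed to any vertical line with real part in $(0,\tfrac12)$; the contour $\Lambda_2$ is already in this strip. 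Fix therefore a common contour $\Lambda=\{\kappa+it:t\in\mathbb R\}$ with $0<\kappa<\tfrac12$. Absolute convergence in the strip follows from Stirling's formula, which gives the standard exponential decay of the Gamma-quotient along vertical lines.

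Next I would compute $2\pi\,\varphi_1(z)-\varphi_2(z)$ on this common contour: the constant term $1$ in the factor $1+e^{2\pi is}$ is cancelled by $-\varphi_2(z)$, leaving
\begin{equation*}
2\pi\,\varphi_1(z)-\varphi_2(z)=\int_{\Lambda}\Gamma(s)^4\,\Gamma(\tfrac12-s)\,2^{-2s}\,e^{2\pi is}\,z^{-3s}\,ds.
\end{equation*}
Finally, using $e^{2\pi is}z^{-3s}=(z\epsilon^{-1})^{-3s}$ with $\epsilon=e^{2\pi i/3}$, the right-hand side is exactly $\varphi_2(z\epsilon^{-1})$, which proves the identity.

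To justify the range $-\pi/6<\arg z<\pi/2$, I would intersect the three domains of definition: $\varphi_1(z)$ requires $|\arg z|<\pi/2$, $\varphi_2(z)$ requires $|\arg z|<5\pi/6$, and $\varphi_2(z\epsilon^{-1})$, since $\arg(z\epsilon^{-1})=\arg z-2\pi/3$, requires $-\pi/6<\arg z<3\pi/2$; the intersection is precisely the stated strip. I do not expect any real obstacle here: the whole argument is a Mellin-Barnes manipulation, and the only point that needs a little care is the contour deformation, which is harmless because no poles are crossed in the strip $0<\operatorname{Re} s<1/2$.
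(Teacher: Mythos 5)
Your proof is correct and supplies exactly the ``direct verification'' that the paper leaves implicit: rewriting $1/\Gamma(s+\tfrac12)$ via the reflection formula, splitting $2\cos(\pi s)e^{\pi is}=1+e^{2\pi is}$, placing all three integrals on a common vertical contour in $0<\operatorname{Re}s<\tfrac12$ (note $\Lambda_1$ can already be chosen there since $\kappa>0$ is arbitrary), and absorbing $e^{2\pi is}$ into $(z\epsilon^{-1})^{-3s}$. The domain analysis at the end is also right, so nothing is missing.
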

\begin{proof}
    This Lemma can be obtained via a direct verification (cf. \cite{Cotti2020LocalModuli}). 
\end{proof}

\begin{lemma}
    Any solution $\varphi$ to \eqref{scalarODE} satisfies the following identity
    \begin{equation}\label{rotation}
    \varphi(z\epsilon^4)-4\varphi(z\epsilon^3)+6\varphi(z\epsilon^2)-4\varphi(z\epsilon)+\varphi(z)=0. 
    \end{equation}
\end{lemma}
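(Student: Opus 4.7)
The plan is to recognize the identity as the nilpotency statement $(T - I)^4 \varphi = 0$, where $T$ is the operator on the four-dimensional solution space $W$ of \eqref{scalarODE} defined by $(T\varphi)(z) := \varphi(z\epsilon)$. Indeed, $\binom{4}{k}(-1)^{4-k}$ are exactly the coefficients of the expansion $(T-I)^4 = T^4 - 4T^3 + 6T^2 - 4T + I$, so \eqref{rotation} for an arbitrary solution $\varphi$ is equivalent to $(T-I)^4 = 0$ on $W$.

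First I would check that $T$ preserves $W$. The Euler operator $D = z\,d/dz$ is invariant under the rescaling $z \mapsto z\epsilon$, and $z^3$ is invariant because $\epsilon^3 = 1$; hence the substitution $z \mapsto z\epsilon$ leaves \eqref{scalarODE} unchanged, so $\varphi(z\epsilon)$ is a solution whenever $\varphi$ is and $T \in \mathrm{End}(W)$.

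To analyze $T$, I would pass to the new variable $w := z^{3}$. Using $D_z = 3\,D_w$ and $z^3 = w$, equation \eqref{scalarODE} becomes, up to an overall non-zero factor, the fourth-order Fuchsian equation
\begin{equation*}
    D_w^4 \psi - 4\, w\, D_w \psi - 2\, w\, \psi = 0,
\end{equation*}
which has a regular singular point at $w = 0$ with indicial equation $r^4 = 0$. A solution of \eqref{scalarODE} corresponds to a (possibly multi-valued) solution $\psi$ of the $w$-equation via $\varphi(z) = \psi(z^3)$, yielding a linear isomorphism between the two four-dimensional solution spaces. Under this identification, the rotation $z \mapsto z\epsilon$ sends $w = z^3$ once counterclockwise around the origin, so $T$ is precisely the local monodromy operator of the $w$-equation at $w = 0$.

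Since the indicial equation at $w = 0$ is $r^4 = 0$, every eigenvalue of this monodromy equals $e^{2\pi i \cdot 0} = 1$; because it acts on a four-dimensional space, Cayley--Hamilton forces it to be unipotent with nilpotency index at most four, i.e.\ $(T - I)^4 = 0$ on $W$. Expanding this operator identity produces \eqref{rotation} on a punctured neighborhood of $0$ on which the five functions $\varphi(z\epsilon^j)$ are jointly defined, and the identity then extends by analytic continuation throughout their common domain. The only substantive step is the reduction to the $w$-variable, after which the conclusion is standard Fuchsian boilerplate; I do not anticipate any serious obstacle.
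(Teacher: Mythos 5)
Your proof is correct, and it arrives at the same structural conclusion as the paper (the rotation operator $T$ is unipotent on the four-dimensional solution space, so $(T-I)^4=0$ by Cayley--Hamilton), but the route to unipotency is genuinely different. The paper works directly in the $z$-variable: it writes out the Frobenius-type basis of solutions $\varphi(z)=\sum_{n\geq 0} z^{3n}\bigl(a_n + b_n\log z + c_n(\log z)^2 + d_n(\log z)^3\bigr)$ with $(a_0,b_0,c_0,d_0)$ as the free initial data, and observes that in the basis given by $(1,0,0,0),\dots,(0,0,0,1)$ the matrix of $A\colon\varphi(z)\mapsto\varphi(z\epsilon)$ is triangular with diagonal entries $1$ (because $\log(z\epsilon)=\log z + \tfrac{2\pi i}{3}$ turns $A$ into a translation on the polynomials $P_0(\log z)$). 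You instead substitute $w=z^3$, which converts $T$ into the genuine local monodromy around the regular singular point $w=0$ of a Fuchsian equation with indicial equation $r^4=0$, and invoke the standard fact that a Fuchsian monodromy whose indicial exponents are all integers (here all $0$) is unipotent. Both arguments are sound; yours is the more conceptual, offloading the work to classical Fuchsian theory, while the paper's is more elementary and self-contained because it exhibits the triangular matrix explicitly. Your change-of-variable computation $D_w^4\psi - 4w D_w\psi - 2w\psi = 0$ and the identification of $T$ with $M_w$ up to conjugation by the pullback isomorphism are correct. (Incidentally, the paper's printed statement $(A-I)^5=0$ is a typo for $(A-I)^4=0$, consistent with the expansion that follows.)
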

\begin{proof} 
(cf. \cite{dubrovin1999painleve}) Note that if $\varphi(z)$ is a solution to \eqref{scalarODE} then $\varphi(z\epsilon)$ also is. Then the map $z\mapsto z\epsilon$ defines a linear map on the solution space of \eqref{scalarODE} 
, where the solutions have the form
\begin{equation*}\label{phiz=0}
    \varphi(z)=\sum_{n\geq 0}z^{3n}\left(a_n+b_n\log z+c_n(\log z)^2+d_n(\log z)^3 \right),
\end{equation*}
where $a_0$, $b_0$, $c_0$, $d_0$ are arbitrary constants, and $a_n, b_n, c_n, d_n$, $n\geq 1$ can be obtained recursively. Choosing a basis of solutions with coefficients $(a_0,b_0,c_0,d_0)=(1,0,0,0)$, $(0,1,0,0)$ and $(0,0,1,0)$, $(0,0,0,1)$, the matrix of the operator
\begin{equation*}
    (A\varphi)(z):=\varphi(z\epsilon)
\end{equation*}
is triangular whose diagonal entires are all 1. By Cayley--Hamilton theorem we then deduce that
$ (A- I)^5=0$, namely
\begin{equation*}
\pushQED{\qed} 
    A^4-4A^3+6A^2-4A+I=0. \qedhere
\popQED
\end{equation*}\phantom\qedhere\end{proof}

\subsubsection{Asymptotic behaviour of fundamental matrix solutions}

Let $\Psi$ be the transition matrix from $(f_1,f_2,f_3,f_4)$  to $(e_1,e_2,e_3,e_4)$ at $q=1$, i.e. (see \eqref{psitransition})
\begin{equation}
    e_{\alpha}=\sum_{j=1}^4 \psi_{j \alpha} f_j , \qquad \alpha = 1,2,3,4. 
\end{equation}
We can compute directly 
\begin{equation}
\begin{aligned}
    f_1 &= \frac{i}{\sqrt{2}}e_1-\frac{i}{\sqrt{2}}e_4,
\\  f_2 &= \frac{1}{\sqrt{6}}e_1+\frac{1}{\sqrt[6]{2} \sqrt{3}}e_2+\frac{\sqrt[6]{2}}{\sqrt{3}}e_3+\frac{1}{\sqrt{6}}e_4,
\\  f_3 &= \frac{1}{\sqrt{6}}e_1+\frac{1}{\sqrt[6]{2} \sqrt{3}}e^{\frac{2 i \pi }{3}}e_2+\frac{\sqrt[6]{2} }{\sqrt{3}} e^{\frac{-2 i \pi }{3}} e_3+\frac{1}{\sqrt{6}}e_4, 
\\  f_4 &= \frac{1}{\sqrt{6}}e_1+\frac{1}{\sqrt[6]{2} \sqrt{3}}e^{\frac{-2 i \pi }{3}}e_2+\frac{\sqrt[6]{2}}{\sqrt{3}}e^{\frac{2 i \pi }{3}}e_3+\frac{1}{\sqrt{6}}e_4. 
\end{aligned}
\end{equation} 
Then
\begin{equation}
    \Psi = \left(
\begin{array}{cccc}
 -\frac{i}{\sqrt{2}} & 0 & 0 & \frac{i}{\sqrt{2}} \\
 \frac{1}{\sqrt{6}} & \frac{\sqrt[6]{2}}{\sqrt{3}} & \frac{1}{\sqrt[6]{2} \sqrt{3}} & \frac{1}{\sqrt{6}} \\
 \frac{1}{\sqrt{6}} & -\frac{\sqrt[6]{2}}{\sqrt{3}}e^{\frac{i \pi}{3}}  & \frac{1}{\sqrt[6]{2} \sqrt{3}}e^{\frac{2i \pi}{3}} & \frac{1}{\sqrt{6}} \\
 \frac{1}{\sqrt{6}} & \frac{\sqrt[6]{2}}{\sqrt{3}}e^{\frac{2i \pi}{3}}  & -\frac{1}{\sqrt[6]{2} \sqrt{3}}e^{\frac{i \pi}{3}} & \frac{1}{\sqrt{6}} \\
\end{array}
\right). 
\end{equation}
Denote 
\begin{equation}
   U= \Psi\mathcal{U}\Psi^{-1} = \left(
\begin{array}{cccc}
 0 & 0 & 0 & 0 \\
 0 & 3\cdot 2^{\frac{2}{3}} & 0 & 0 \\
 0 & 0 & 3\cdot 2^{\frac{2}{3}}\epsilon^{2} & 0 \\
 0 & 0 & 0 & 3\cdot 2^{\frac{2}{3}}\epsilon \\
\end{array}
\right), 
\end{equation}
and 
\begin{equation}
   V= \Psi\mu\Psi^{-1} = \left(
\begin{array}{cccc}
 0 & \frac{\sqrt{3}}{2}i  & \frac{\sqrt{3}}{2}i & \frac{\sqrt{3}}{2}i \\
 -\frac{\sqrt{3}}{2}i & 0 & -\frac{\sqrt{3}}{6}i & \frac{\sqrt{3}}{6}i \\
 -\frac{\sqrt{3}}{2}i & \frac{\sqrt{3}}{6}i & 0 & -\frac{\sqrt{3}}{6}i \\
 -\frac{\sqrt{3}}{2}i & -\frac{\sqrt{3}}{6}i & \frac{\sqrt{3}}{6}i & 0 \\
\end{array}
\right). 
\end{equation}
For a fundamental matrix solution $\mathcal{Y}(z)$ to \eqref{LG24ODEsystem}, let
\begin{equation}
    Y(z) = \Psi\mathcal{Y}(z). 
\end{equation}
Then $Y=Y(z)$ satisfies the equation
\begin{equation}\label{LG24ODEsystemnormalized}
    \begin{aligned}
    \dfrac{dY}{dz}=\left(U+\dfrac{V}{z}\right)Y. 
    \end{aligned}
\end{equation}

Note that there exists unique fundamental matrix solutions $Y_{L/R}$ to \eqref{LG24ODEsystemnormalized}, analytic in $\Pi_{\text{left/right}}$, respectively, such that 
\begin{equation}
    Y_{L/R}(z)\sim G(z) e^{zU} 
\end{equation}
as $|z| \rightarrow \infty$ within the sectors, where $G (z)$ is a formal power series of~$z^{-1}$ with $G(\infty)=I$ (see \eqref{asymptoticformal}). Then the corresponding solutions to \eqref{LG24ODEsystem} have the following asymptotic behaviour as $|z| \rightarrow \infty$
 \begin{equation}\label{LG24Yformal}
 \begin{aligned}
\mathcal{Y}_{L/R}(z) &= \Psi^{-1}Y_{L/R}(z) 
\\ &=\left(
\begin{array}{cccc}
 \frac{i}{\sqrt{2}}e^{u_1 z} & \frac{1}{\sqrt{6}}e^{u_2 z} & \frac{1}{\sqrt{6}}e^{u_3 z} & \frac{1}{\sqrt{6}}e^{u_4 z} \\
 0 & \frac{1}{\sqrt[6]{2} \sqrt{3}}e^{u_2 z} & -\frac{e^{-\frac{1}{3} i \pi}}{\sqrt[6]{2} \sqrt{3}}e^{u_3 z} & -\frac{e^{\frac{i \pi }{3}}}{\sqrt[6]{2} \sqrt{3}}e^{u_4 z} \\
 0 & \frac{\sqrt[6]{2}}{\sqrt{3}}e^{u_2 z} & -\frac{i \sqrt[6]{2} e^{-\frac{1}{6} i \pi}}{\sqrt{3}}e^{u_3 z} & -\frac{\sqrt[6]{2} e^{-\frac{1}{3} i \pi}}{\sqrt{3}}e^{u_4 z} \\
 -\frac{i}{\sqrt{2}}e^{u_1 z} & \frac{1}{\sqrt{6}}e^{u_2 z} & \frac{1}{\sqrt{6}}e^{u_3 z} & \frac{1}{\sqrt{6}}e^{u_4 z} \\
\end{array}
\right)   \left(1+O\left(\dfrac{1}{z}\right)\right).
\end{aligned}
\end{equation}


\subsubsection{Fundamental matrix solutions $\mathcal{Y}_{L/R}$ and the Stokes matrix}
Let $S'=(s'_{jk})$ be the Stokes matrix relating $Y_L$ and $Y_R$ in the narrow sector $\Pi_{+}$, i.e. 
\begin{equation}
    Y_L(z)=Y_R(z) S', \quad  z \in \Pi_{+}. 
\end{equation}
Then we have
 \begin{equation}\label{y44asm}
     y^L_{4k}(z)=\sum_{j=1}^4 y^R_{4j}(z) s'_{jk}, \ \ \text{for}\ \dfrac{\pi}{6}< \text{arg} z<\dfrac{\pi}{3},
 \end{equation}
Note that $y^{L/R}_{4k}$ have the following asymptotic behaviour as $|z| \rightarrow \infty$
\begin{equation}
    y^{L/R}_{4k}(z)=c_k e^{u_k z}\left(1+O\left(\dfrac{1}{z}\right)\right) ,\quad \text{for}\  z \in \Pi_{\text{left/right}}, 
\end{equation}
where $c_1=-\frac{i}{\sqrt{2}} ,\ c_2=\frac{1}{\sqrt{6}} ,\ c_3=\frac{1}{\sqrt{6}} ,\ c_4=\frac{1}{\sqrt{6}} $. 

\vspace{6pt}
By comparing asymptotic behaviours , we can determine the form of $S'$. For example, by definition, we have 
 \begin{equation}
     y^L_{44}(z)=\sum_{j=1}^4 y^R_{4j}(z) s'_{j4}, \quad \text{for}\ \dfrac{\pi}{6}< \text{arg} z<\dfrac{\pi}{3}.
 \end{equation}
Since $e^{u_4 z}$ is dominated by $e^{u_1 z},e^{u_2 z},e^{u_3 z}$ on the sector $0< \text{arg} z<\frac{2\pi}{3}$, i.e. $|e^{u_4 z}|<|e^{u_k z}|, \ k=1,2,3$, functions $y^R_{41}, y^R_{42}, y^R_{43}$ will not appear on the right-hand side. 
Then we have 
\begin{equation}\label{y44L=y44R}
    y^L_{44}(z)=y^R_{44}(z), \quad \text{for}\ \dfrac{\pi}{6}< \text{arg} z<\dfrac{\pi}{3}, 
\end{equation}
which means $s'_{14}=0, s'_{24}=0, s'_{34}=0, s'_{44}=1$. By similar dominance arguments, we have $s'_{11}=1, s'_{22}=1, s'_{33}=1, s'_{23}=0, s'_{21}=0, s'_{31}=0$ . Then $S'$ must have the form
\begin{equation}
    \left(
\begin{array}{cccc}
 1 & * & * & 0 \\
 0 & 1 & 0 & 0 \\
 0 & * & 1 & 0 \\
 * & * & * & 1 \\
\end{array}
\right).
\end{equation}

To explicitly compute $S'$, we have to construct entries of fundamental matrix solutions $\mathcal{Y}_{L/R}=(y^{L/R}_{jk})_{1\leq j,k \leq 4}$ on each sector $\Pi_{\text{left/right}}$ that have the same asymptotic behaviour as \eqref{LG24Yformal}. Note that we only have to construct $y^{L/R}_{4k}, \ 1\leq k \leq 4$  due to Proposition \ref{LG24ODEsystemProp}.

From \eqref{y44L=y44R} and the existence of $y^L_{44}$ and $ y^R_{44}$, $\varphi_1(z)$ can be analytically continued to the sector $-\frac{5\pi}{6}< \text{arg} z<\frac{4\pi}{3}$. Since the asymptotic behaviours of $y^L_{44}, y^R_{44}$ as $|z| \rightarrow \infty$ are same, we have 
\begin{equation}\label{phi1asym1}
    -\frac{z^{\frac{3}{2}}}{2\sqrt{2}\pi^2} \varphi_1(z)= \frac{1}{\sqrt{6}} e^{u_4 z}\left(1+O\left(\dfrac{1}{z}\right)\right),\quad \text{for}\  -\frac{5\pi}{6}< \text{arg} z<\frac{4\pi}{3}.
\end{equation}
Note that the continued  $\varphi_1(z)$ may not have the integral representation \eqref{sol1} in the above region. 
Since $\varphi_1(z\epsilon^{2})$ is also a solution of \eqref{scalarODE}, we have
\begin{equation}
    -\frac{z^{\frac{3}{2}}}{2\sqrt{2}\pi^2} \varphi_1(z\epsilon^{2}) = \text{linear combination of}\  y^R_{4k},\ 1\leq k \leq 4, \quad \text{for}\  -\frac{5\pi}{6}< \text{arg} z<0. 
\end{equation}
 Rotating $z$ by $\epsilon^{2}$ in \eqref{phi1asym1}, we get 
\begin{equation}\label{asmphi1q}
     -\frac{z^{\frac{3}{2}}}{2\sqrt{2}\pi^2} \varphi_1(z\epsilon^{2}) = \frac{1}{\sqrt{6}} e^{u_2 z}\left(1+O\left(\dfrac{1}{z}\right)\right),\quad \text{for}\  -\frac{13\pi}{6}< \text{arg} z<0. 
\end{equation}
From \eqref{asmphi1q} and the fact $e^{u_2 z}$ is dominated by $e^{u_1 z},e^{u_3 z},e^{u_4 z}$ on the sector $-\frac{4\pi}{3}< \text{arg} z<-\frac{2\pi}{3}$, we have 

\begin{equation}
     -\frac{z^{\frac{3}{2}}}{2\sqrt{2}\pi^2} \varphi_1(z\epsilon^{2}) = y^R_{42}(z),\quad \text{for}\    -\frac{5\pi}{6}< \text{arg} z<0. 
\end{equation}
By the existence of $y^R_{42}$ on $-\frac{5\pi}{6}< \text{arg} z<\frac{\pi}{3}$, $-\frac{z^{\frac{3}{2}}}{2\sqrt{2}\pi^2} \varphi_1(z\epsilon^{2})$ can be analytically continued to $-\frac{13\pi}{6}< \text{arg} z<\frac{\pi}{3}$ and still have the asymptotic behaviour as in \eqref{asmphi1q}. Then we have the following proposition for $\varphi_1(z)$. 
\begin{proposition}
   The following asymptotic behaviour holds as $|z| \rightarrow \infty$ 
\begin{equation}
    -\frac{z^{\frac{3}{2}}}{2\sqrt{2}\pi^2} \varphi_1(z)= \frac{1}{\sqrt{6}} e^{u_4 z}\left(1+O\left(\dfrac{1}{z}\right)\right),\quad \textup{for}\  -\frac{5\pi}{6}< \textup{arg} z<\frac{5\pi}{3}.
\end{equation}
\end{proposition}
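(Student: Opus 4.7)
The plan is to derive the proposition by pulling back, under the substitution $w = z\epsilon^{2}$, the asymptotic identity \eqref{asmphi1q}, which by the preceding discussion has been analytically continued to the extended sector $-\frac{13\pi}{6} < \arg z < \frac{\pi}{3}$. Since $\arg w = \arg z + \frac{4\pi}{3}$ on the universal cover, this validity sector is transported to $-\frac{5\pi}{6} < \arg w < \frac{5\pi}{3}$, which is precisely the range appearing in the statement of the proposition.

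What remains is to rewrite the right-hand side of \eqref{asmphi1q} in terms of $w$. Two small algebraic identities do the job. First,
\begin{equation*}
u_{2}\epsilon^{-2} \;=\; 3\cdot 2^{2/3}\,e^{-4\pi i/3} \;=\; 3\cdot 2^{2/3}\epsilon \;=\; u_{4},
\end{equation*}
which converts the exponential $e^{u_{2}z}$ into $e^{u_{4}w}$. Second, on the relevant sheet of the universal cover one has $(w\epsilon^{-2})^{3/2} = w^{3/2}$ (up to the branch choice described below), which converts the prefactor $z^{3/2}$ into $w^{3/2}$ while preserving the constant $-\tfrac{1}{2\sqrt{2}\pi^{2}}$. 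Substituting both identities into \eqref{asmphi1q} yields the claimed asymptotic.

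The only delicate point is branch bookkeeping for $z^{3/2}$: the function $\varphi_{1}$ is initially defined by the Mellin--Barnes integral \eqref{sol1} on $|\arg z| < \pi/2$ and has subsequently been continued across several sectors via comparison first with $y^{L/R}_{44}$ and then with $y^{R}_{42}$. One must therefore verify that the branch of $z^{3/2}$ used in \eqref{phi1asym1} and carried through the $\epsilon^{2}$-rotation matches the branch implicit in the statement of the proposition, so that no spurious cube-root-of-unity or sign factor appears upon pull-back. Once that compatibility is recorded, the proof is a one-line substitution, and this is the step I expect to require the greatest care.
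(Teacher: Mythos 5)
Your proposal is correct and follows the paper's own route exactly: the paper establishes that the asymptotic \eqref{asmphi1q} for $-\frac{z^{3/2}}{2\sqrt{2}\pi^2}\varphi_1(z\epsilon^2)$ extends to $-\frac{13\pi}{6}<\arg z<\frac{\pi}{3}$ via the identification with $y^R_{42}$, and then the Proposition follows immediately by the (implicitly performed) substitution you make explicit — rotating $z\mapsto z\epsilon^{-2}$, using $u_2\epsilon^{-2}=u_4$ and $(\epsilon^{-2})^{3/2}=1$ on the chosen branch, which carries the sector to $-\frac{5\pi}{6}<\arg z<\frac{5\pi}{3}$. Your flagging of the branch-consistency check is sensible, though in the paper's convention the $z^{3/2}$ prefactor is already written in a rotation-compatible form in \eqref{asmphi1q}, so no spurious factor arises.
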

\noindent Therefore, we conclude that 
\begin{equation}
    y^R_{42}(z) = -\frac{z^{\frac{3}{2}}}{2\sqrt{2}\pi^2} \varphi_1(z\epsilon^{2}),\quad \text{for}\  z \in \Pi_{\text{right}}.
\end{equation}

\vspace{15pt}

Now $\varphi_2(z)$ can be analytically continued to the sector $\frac{\pi}{2}< \text{arg} z<\frac{5\pi}{6}$ through the identity \eqref{euler}. From the fact $e^{u_4 z}$ is dominated by $e^{u_1 z}$ on the whole sector $-\frac{\pi}{6}< \text{arg} z<\frac{5\pi}{6}$ and the uniformity of asymptotic behaviour, we can determine the asymptotic behaviour of $\varphi_2(z)$ as $|z| \rightarrow \infty$: 
\begin{equation}
   -\frac{\sqrt{2}i z^{\frac{3}{2}}}{\pi^2} \varphi_2(z)= -\frac{i}{\sqrt{2}} e^{u_1 z}\left(1+O\left(\dfrac{1}{z}\right)\right),\quad \text{for}\  -\frac{\pi}{6}< \text{arg} z<\frac{5\pi}{6}.
\end{equation}
By the identity \eqref{euler} with the variable $z$ rotated by $\epsilon$ and similar arguments as above, we finally arrive at the following proposition for $\varphi_2(z)$.

\begin{proposition}
   The following asymptotic behaviour holds as $|z| \rightarrow \infty$ 
\begin{equation}\label{phi2asym}
   -\frac{\sqrt{2}i z^{\frac{3}{2}}}{\pi^2} \varphi_2(z)= -\frac{i}{\sqrt{2}} e^{u_1 z}\left(1+O\left(\dfrac{1}{z}\right)\right),\quad \textup{for}\  -\frac{5\pi}{6}< \textup{arg} z<\frac{5\pi}{6}.
\end{equation}
\end{proposition}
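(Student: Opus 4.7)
The plan is to take the Laplace-method asymptotic of the earlier lemma, which gives $\varphi_2(z)\sim\tfrac{\pi^2}{2}z^{-3/2}$ on $-\pi/6<\arg z<\pi/2$, and extend it by rotating the argument by $\pm 2\pi/3$ using the three-term identity \eqref{euler} together with the already-established extended asymptotic of $\varphi_1$ on $-5\pi/6<\arg z<5\pi/3$.

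For the downward extension to $-5\pi/6<\arg w<-\pi/6$, substitute $z=w\epsilon$ in \eqref{euler}, which brings $\arg(w\epsilon)$ into $(-\pi/6,\pi/2)$ and yields
$$\varphi_2(w)=2\pi\,\varphi_1(w\epsilon)-\varphi_2(w\epsilon).$$
On the right-hand side both arguments lie in the Laplace sector for $\varphi_2$ and in the extended sector for $\varphi_1$, so the proposition for $\varphi_1$ gives $\varphi_1(w\epsilon)\sim-\tfrac{2\sqrt{2}\pi^2}{\sqrt{6}}(w\epsilon)^{-3/2}e^{u_4\epsilon\,w}$, and the exponent simplifies to $u_3 w$ via $u_4\epsilon=3\cdot 2^{2/3}\epsilon^2=u_3$. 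For $\arg w\in(-5\pi/6,-\pi/6)$ one checks $\arg(u_3 w)\in(-3\pi/2,-5\pi/6)$, on which $\cos<0$, so $e^{u_3w}$ decays exponentially and the $\varphi_1$ contribution is subdominant. The surviving term $-\varphi_2(w\epsilon)\sim-\tfrac{\pi^2}{2}(w\epsilon)^{-3/2}$, combined with the principal-branch identity $(w\epsilon)^{-3/2}=-w^{-3/2}$ that follows from $\arg(w\epsilon)+\tfrac{3}{2}\cdot\tfrac{2\pi}{3}=\pi$, produces exactly $\varphi_2(w)\sim\tfrac{\pi^2}{2}w^{-3/2}$ on this sector.

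For the upward extension to $\pi/2<\arg z<5\pi/6$, rewrite \eqref{euler} as $\varphi_2(z)=2\pi\,\varphi_1(z)-\varphi_2(z\epsilon^{-1})$ and continue it analytically to this range, which is legal since $\varphi_2$ is analytic on $-5\pi/6<\arg z<5\pi/6$ by its integral representation and $\varphi_1$ has already been extended past $\pi/2$. For $\arg z$ in the target range we have $\arg(z\epsilon^{-1})\in(-\pi/6,\pi/6)\subset(-\pi/6,\pi/2)$, so $\varphi_2(z\epsilon^{-1})$ is controlled by the original Laplace asymptotic, while the $\varphi_1(z)$ term carries $e^{u_4 z}$ with $\arg(u_4 z)\in(7\pi/6,3\pi/2)$, again giving $\cos<0$ and an exponentially small factor. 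The same branch identification $(z\epsilon^{-1})^{-3/2}=-z^{-3/2}$ then yields $\varphi_2(z)\sim\tfrac{\pi^2}{2}z^{-3/2}$ on this sector as well.

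Patching the three overlapping sectors gives the asymptotic on all of $-5\pi/6<\arg z<5\pi/6$, and rewriting it as $-\tfrac{\sqrt{2}\,i\,z^{3/2}}{\pi^2}\varphi_2(z)\sim-\tfrac{i}{\sqrt{2}}e^{u_1z}$ (with $u_1=0$) recovers the stated form. The main obstacle is the Stokes-dominance bookkeeping that picks the correct surviving term in each sub-sector, together with the branch arithmetic for $z^{3/2}$; both reduce to the explicit Stokes-ray picture already recorded around \eqref{stokesrays} and elementary manipulations of $\epsilon=e^{2\pi i/3}$.
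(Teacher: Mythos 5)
Your argument is correct and uses exactly the strategy the paper sketches in its two-sentence proof: extend the Laplace-sector asymptotic of $\varphi_2$ by rotating the Euler identity \eqref{euler} by $\epsilon^{\pm 1}$, appeal to the already-established extended asymptotic of $\varphi_1$, and discard the $\varphi_1$-contribution because $e^{u_4 z}$ (respectively $e^{u_3 w}$) is recessive on the relevant sub-sector. The only discrepancy is that you quote the base asymptotic as $\varphi_2(z)\sim\tfrac{\pi^2}{2}z^{-3/2}$, whose normalization matches the proposition and the later formula for $y^R_{41}$ rather than the prefactor printed in the earlier lemma; that is a typo in the lemma, not a gap in your proof.
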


By the identity \eqref{euler}, $\varphi_2(z)$ can be further continued to $-\frac{3\pi}{2}< \text{arg} z<\frac{5\pi}{3}$, although it may not have the same asymptotic behaviour as \eqref{phi2asym}  and the integral representation as \eqref{sol2} beyond the sector $-\frac{5\pi}{6}< \text{arg} z<\frac{5\pi}{6}$. Since it is a linear combination of solutions, the continued $\varphi_2(z)$ is still a solution to \eqref{scalarODE}. Now the identity \eqref{euler} also holds in a larger sector. 
\begin{lemma}\label{LG24identity}
    The identity \eqref{euler} holds for $-\frac{5\pi}{6}< \textup{arg} z<\frac{5\pi}{3}$.
\end{lemma}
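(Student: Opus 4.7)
The plan is to invoke the identity theorem for holomorphic functions after enlarging the domains of $\varphi_1$ and $\varphi_2$ by analytic continuation. The earlier identity lemma establishes \eqref{euler} on the sector $-\pi/6 < \arg z < \pi/2$, where both sides are given directly by the integral representations \eqref{sol1}, \eqref{sol2}.

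Since \eqref{scalarODE} is a linear ODE with its only singular point at $z=0$, every local solution admits a unique analytic continuation along any path in $\mathbb{C}^*$; in particular $\varphi_1$ and $\varphi_2$ become single-valued holomorphic functions on any simply connected region of the universal cover of $\mathbb{C}^*$. The two propositions just preceding the lemma already supply $\varphi_1$ on $\{-5\pi/6 < \arg z < 5\pi/3\}$ and $\varphi_2$ on $\{-5\pi/6 < \arg z < 5\pi/6\}$. I would further lift $\varphi_2$ to a simply connected sector containing both $\{-5\pi/6 < \arg z < 5\pi/3\}$ (needed for the term $\varphi_2(z)$ of the identity) and the shifted sector $\{-3\pi/2 < \arg z < \pi\}$ (needed for $\varphi_2(z\epsilon^{-1})$, since $\arg(z\epsilon^{-1}) = \arg z - 2\pi/3$). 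The union $\{-3\pi/2 < \arg z < 5\pi/3\}$ is a simply connected arc of the universal cover, on which the continuation exists and is single-valued.

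With these continuations in place, define
\[F(z) := \varphi_2(z\epsilon^{-1}) - 2\pi\,\varphi_1(z) + \varphi_2(z).\]
Then $F$ is holomorphic on the connected open sector $\{-5\pi/6 < \arg z < 5\pi/3\}$, and the earlier lemma yields $F \equiv 0$ on the nonempty open subsector $\{-\pi/6 < \arg z < \pi/2\}$. By the identity theorem $F \equiv 0$ throughout the enlarged sector, which is precisely the desired identity.

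The main technical subtlety is that the enlarged sector already has angular extent $5\pi/2 > 2\pi$, and after the additional $-2\pi/3$ rotation the total range of $\varphi_2$ that is referenced exceeds $3\pi$. One must therefore work on the universal cover of $\mathbb{C}^*$ to make sense of $\varphi_2$ as a single-valued function on $\{-3\pi/2 < \arg z < 5\pi/3\}$. Since this lifted sector is simply connected, the standard existence and uniqueness theory for linear ODEs produces the required continuation unambiguously, so no monodromy obstruction arises within the region under consideration.
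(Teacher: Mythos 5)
Your proof is correct and is essentially the argument the paper has in mind, just carried out in detail: the paper continues $\varphi_2$ (via the functional equation \eqref{euler} and the earlier propositions) and then asserts, without explicit justification, that \eqref{euler} therefore holds on the larger sector. You supply the missing step by passing to the universal cover of $\mathbb{C}^*$, noting that solutions of the linear ODE \eqref{scalarODE} (regular at every point of $\mathbb{C}^*$) continue uniquely along any path, and invoking the identity theorem for the holomorphic function $F(z)=\varphi_2(z\epsilon^{-1})-2\pi\varphi_1(z)+\varphi_2(z)$, which vanishes on the open subsector $-\pi/6<\arg z<\pi/2$. Your observation that the sector involved has angular width exceeding $2\pi$ (indeed the total range of $\arg$ referenced for $\varphi_2$ exceeds $3\pi$), so that one must genuinely work on the universal cover rather than on $\mathbb{C}^*$ itself, is the right technical point to flag; the paper leaves it implicit. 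The two approaches agree by uniqueness of analytic continuation, so there is no discrepancy of substance.
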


Now we can determine some entries of fundamental matrix solutions besides $y^R_{42}$: 
\begin{equation}\label{yLR44yL42yR43}
\begin{aligned}
    y^L_{44}(z)&=-\frac{z^{\frac{3}{2}}}{2\sqrt{2}\pi^2} \varphi_1(z),\quad \text{for}\  z \in \Pi_{\text{left}},  
 \\ y^R_{44}(z)&=-\frac{z^{\frac{3}{2}}}{2\sqrt{2}\pi^2} \varphi_1(z),\quad \text{for}\  z \in \Pi_{\text{right}},
 \\ y^L_{42}(z)&=\frac{z^{\frac{3}{2}}}{2\sqrt{2}\pi^2} \varphi_1(z\epsilon^{-1}),\quad \text{for}\  z \in \Pi_{\text{left}},
 \\ y^R_{43}(z)&=\frac{z^{\frac{3}{2}}}{2\sqrt{2}\pi^2} \varphi_1(z\epsilon),\quad \text{for}\  z \in \Pi_{\text{right}},
 \\ y^L_{41}(z)&=\frac{\sqrt{2}i z^{\frac{3}{2}}}{\pi^2} \varphi_2(z\epsilon^{-1}),\quad \text{for}\  z \in \Pi_{\text{left}},
 \\ y^R_{41}(z)&=-\frac{\sqrt{2}i z^{\frac{3}{2}}}{\pi^2} \varphi_2(z), \quad \text{for}\  z \in \Pi_{\text{right}}.
\end{aligned}
\end{equation}

 The only remained entry is $y^L_{43}$. For $y^L_{43}$, let some coefficients be undetermined: 
\begin{equation}\label{asmphi1q3}
    -\frac{z^{\frac{3}{2}}}{2\sqrt{2}\pi^2} \varphi_1(z\epsilon^{-2})=y^L_{43}(z)+\gamma_2''y^L_{42}(z),\quad \text{for}\   \frac{\pi}{2}< \text{arg} z<\frac{4\pi}{3},
\end{equation}
and 
\begin{equation}\label{asmphi1q4}
    \frac{z^{\frac{3}{2}}}{2\sqrt{2}\pi^2} \varphi_1(z\epsilon)=y^L_{43}(z)+\gamma_1''y^L_{41}(z)+\gamma_4''y^L_{44}(z),\quad \text{for}\  \frac{\pi}{6}< \text{arg} z<\pi.
\end{equation}
Subtract \eqref{asmphi1q3} and \eqref{asmphi1q4}, we get
\begin{equation}\label{subtract}
    -\frac{z^{\frac{3}{2}}}{2\sqrt{2}\pi^2} \varphi_1(z\epsilon^{-2})-\frac{z^{\frac{3}{2}}}{2\sqrt{2}\pi^2} \varphi_1(z\epsilon)=\gamma_2''y^L_{42}(z)-\gamma_1''y^L_{41}(z)-\gamma_4''y^L_{44}(z) ,\quad \text{for}\  \frac{\pi}{2}< \text{arg} z<\pi.
\end{equation}
We then replace all $\varphi_1$ by $\varphi_2$ in \eqref{subtract} by using \eqref{euler} and apply \eqref{rotation} to $\varphi_2$. By comparing cofficients, we obtain  
\begin{equation}
    y^L_{43}(z)=-\frac{z^{\frac{3}{2}}}{2\sqrt{2}\pi^2} \varphi_1(z\epsilon^{-2})+5y^L_{42}(z),\quad \text{for}\  \frac{\pi}{6}< \text{arg} z<\frac{2\pi}{3},
\end{equation}
and 
\begin{equation}
    y^L_{43}(z)=\frac{z^{\frac{3}{2}}}{2\sqrt{2}\pi^2} \varphi_1(z\epsilon)-4y^L_{41}(z)+5y^L_{44}(z),\quad \text{for}\  \frac{\pi}{2}< \text{arg} z<\frac{4\pi}{3}.
\end{equation}

Note that we have finished the construction of $\mathcal{Y}_{L/R}$. Comparing $\mathcal{Y}_{L}$ and $\mathcal{Y}_{R}$, we get the Stokes matrix $S'$ 
\begin{equation}
  S' = \left(
\begin{array}{cccc}
 1 & 4 & 4 & 0 \\
 0 & 1 & 0 & 0 \\
 0 & 5 & 1 & 0 \\
 -4 & -5 & -11 & 1 \\
\end{array}
\right) . 
\end{equation}
We then triangularize $S'$ by a permutation $P$ of canonical coordinates: 
\begin{equation}
S = PS'P^{-1}= \begin{pmatrix}
 1 & -4 & -11 & -5 \\
 0 & 1 & 4 & 4 \\
 0 & 0 & 1 & 5 \\
 0 & 0 & 0 & 1 \\
\end{pmatrix}, 
\end{equation}
where 
\begin{equation*}
  P = \left(
\begin{array}{cccc}
 0 & 0 & 0 & 1 \\
 1 & 0 & 0 & 0 \\
 0 & 0 & 1 & 0 \\
 0 & 1 & 0 & 0 \\
\end{array}
\right).  
\end{equation*}

\subsubsection{Central connection matrix}

Let $C^{'}=(c_{jk}^{'})_{1\leq j,k \leq 4}$ be the central connection matrix relating $\mathcal{Y}_{top}(z)$ and $\mathcal{Y}_R(z)$, i.e. 
\begin{equation}\label{centralconnetioneq}
   \mathcal{Y}_R(z) =\mathcal{Y}_{\text{top}}(z)C^{'} ,
\end{equation}

To compute $C^{'}$, we have to find the expansion of $Y^{R}$ at $z=0$ and compare the coefficents to determine $c_{jk}^{'}$. For example, 
\begin{equation}
\begin{aligned}
    y_{44}^R(z)&=-\frac{z^{\frac{3}{2}}}{2\sqrt{2}\pi^2} \varphi_1(z)=-\frac{z^{\frac{3}{2}}}{2\sqrt{2}\pi^2}\int_{\Lambda_1} \frac{\Gamma (s)^4}{\Gamma \left(s+\frac{1}{2}\right)}2^{-2 s} e^{\pi  i s} z^{-3 s} ds \\ &= -\frac{z^{\frac{3}{2}}}{2\sqrt{2}\pi^2}2\pi i\sum_{n=0}^{\infty}\text{res}_{s=-n}\left(\frac{\Gamma (s)^4}{\Gamma \left(s+\frac{1}{2}\right)}2^{-2 s} e^{\pi  i s} z^{-3 s}\right)
    \\  &=z^{\frac{3}{2}} \left(\frac{9 i }{2 \sqrt{2} \pi ^{3/2}}(\log z)^3+\frac{i (162 \gamma -54 i \pi ) }{12 \sqrt{2} \pi ^{3/2}}(\log z)^2 +\frac{i \left(162 \gamma ^2-108 i \gamma  \pi -15 \pi ^2\right) }{12 \sqrt{2} \pi ^{3/2}}\log z\right.
    \\ &\left.\quad \quad \quad +\frac{i \left(-12 \zeta (3)+54 \gamma ^3-54 i \gamma ^2 \pi -15 \gamma  \pi ^2+i \pi ^3\right)}{12 \sqrt{2} \pi ^{3/2}}\right)+z^{\frac{9}{2}} \left(\frac{9 i}{\sqrt{2} \pi ^{3/2}}(\log z)^3\right.
    \\ &\left.  +\frac{i (-216+324 \gamma -108 i \pi ) }{12 \sqrt{2} \pi ^{3/2}}(\log z)^2+\frac{i \left(144-432 \gamma +324 \gamma ^2+144 i \pi -216 i \gamma  \pi -30 \pi ^2\right) }{12 \sqrt{2} \pi ^{3/2}}\log z \right.
    \\ &\left.  +\frac{i \left(-24 \zeta (3)+144 \gamma -216 \gamma ^2+108 \gamma ^3-48 i \pi +144 i \gamma  \pi -108 i \gamma ^2 \pi +20 \pi ^2-30 \gamma  \pi ^2+2 i \pi ^3\right)}{12 \sqrt{2} \pi ^{3/2}}\right)
    \\ & \qquad \qquad \qquad \qquad \qquad \qquad \qquad \qquad  \qquad \qquad  \qquad \qquad \qquad \qquad  \qquad \qquad +O(z^{\frac{15}{2}}). 
\end{aligned}
\end{equation}
\\ From \eqref{centralconnetioneq},  
\begin{equation}
\begin{aligned}
    y_{44}^R&(z) = z^{\frac{3}{2}}  \left(9 c^{'}_{1,4} (\log z)^3+9 c^{'}_{2,4} (\log z)^2+3 c^{'}_{3,4} \log z+c^{'}_{4,4}\right)+z^{\frac{9}{2}} \left(18 c^{'}_{1,4} (\log z)^3\right.
   \\ &\left. +\left(18 c^{'}_{2,4}-36 c^{'}_{1,4}\right) (\log z)^2 +\left(24 c^{'}_{1,4}-24 c^{'}_{2,4}+6 c^{'}_{3,4}\right) \log z+8 c^{'}_{2,4}-4 c^{'}_{3,4}+2 c^{'}_{4,4}\right)+O(z^{15/2}). 
\end{aligned}
\end{equation}
Compare the coefficents we get 
\begin{equation}
\begin{aligned}
    c^{'}_{1,4} &= \frac{i}{2 \sqrt{2} \pi ^{3/2}},
\\  c^{'}_{2,4} &= \frac{\pi +3 i \gamma }{2 \sqrt{2} \pi ^{3/2}},
\\  c^{'}_{3,4} &= \frac{54 i \gamma ^2+36 \gamma  \pi -5 i \pi ^2}{12 \sqrt{2} \pi ^{3/2}},
\\  c^{'}_{4,4} &= -\frac{12 i \zeta (3)-54 i \gamma ^3-54 \gamma ^2 \pi +15 i \gamma  \pi ^2+\pi ^3}{12 \sqrt{2} \pi ^{3/2}}.
\end{aligned}
\end{equation}
In a similar way, we compare $y_{41}^R, y_{42}^R, y_{43}^R$ with right hand side of \eqref{centralconnetioneq} and we can obtain $C^{'}$. After the permutation $P$ of canonical coordinates, we get

\begin{equation}
\begin{aligned}
  C = C^{'}P^{-1} &=  \left(
\begin{array}{cc}
 \frac{i}{2 \sqrt{2} \pi ^{3/2}} & \frac{i}{\sqrt{2} \pi ^{3/2}}  \\
 \frac{\pi +3 i \gamma }{2 \sqrt{2} \pi ^{3/2}} & \frac{3 i \gamma }{\sqrt{2} \pi ^{3/2}}  \\
 \frac{54 i \gamma ^2+36 \gamma  \pi -5 i \pi ^2}{12 \sqrt{2} \pi ^{3/2}} & \frac{i \left(54 \gamma ^2+7 \pi ^2\right)}{6 \sqrt{2} \pi ^{3/2}}  \\
 -\frac{12 i \zeta (3)-54 i \gamma ^3-54 \gamma ^2 \pi +15 i \gamma  \pi ^2+\pi ^3}{12 \sqrt{2} \pi ^{3/2}} & \frac{i \left(-4 \zeta (3)+18 \gamma ^3+7 \gamma  \pi ^2\right)}{2 \sqrt{2} \pi ^{3/2}}  \\
\end{array}
\right.
\\ &\left.\quad \quad \begin{array}{cc}
  -\frac{i}{2 \sqrt{2} \pi ^{3/2}} & \frac{i}{2 \sqrt{2} \pi ^{3/2}} \\
\frac{\pi -3 i \gamma }{2 \sqrt{2} \pi ^{3/2}} & -\frac{3 (\pi -i \gamma )}{2 \sqrt{2} \pi ^{3/2}} \\
 \frac{-54 i \gamma ^2+36 \gamma  \pi +5 i \pi ^2}{12 \sqrt{2} \pi ^{3/2}} & \frac{i \left(54 \gamma ^2+108 i \gamma  \pi -53 \pi ^2\right)}{12 \sqrt{2} \pi ^{3/2}} \\
  \frac{12 i \zeta (3)+(\pi -3 i \gamma ) \left(18 \gamma ^2+12 i \gamma  \pi -\pi ^2\right)}{12 \sqrt{2} \pi ^{3/2}} & \frac{-4 i \zeta (3)+18 i \gamma ^3-54 \gamma ^2 \pi -53 i \gamma  \pi ^2+17 \pi ^3}{4 \sqrt{2} \pi ^{3/2}} \\
\end{array}
\right). 
\end{aligned}
\end{equation}

\subsection{Proof of the refined Dubrovin conjecture \ref{refinedconjecture} for $LG(2,4)$}

Let $\mathcal{U}$ be the tautological bundle on $LG(2,4)$, and $\mathcal{U}^*:= \sheafhom(\mathcal{U},\mathcal{O})$. The collection 
\begin{equation}\label{Fonarevcollection}
  \left( \mathcal{O}, \mathcal{O}(1), \Sigma^{(2,1)}\mathcal{U}^*, \mathcal{O}(2) \right)
\end{equation}
is a full exceptional collection \cite{fonarev2022FECLagrangian} (cf. \cite{Kapranov1988}), where $\Sigma$ is the Schur functor. Since the semisimplicity of the small quantum cohomology of $LG(2,4)$ at $q=1$ is known \cite{Perrin2010quantumminusculeIII}, we have proved part $1$ of Conjecture \ref{originalconjecture} for $LG(2,4)$.


Now twist \eqref{Fonarevcollection} by $\wedge^2 \mathcal{U}^*$: 
\begin{equation}\label{twistedcollection}
     \left( \mathcal{O}\otimes \wedge^2 \mathcal{U}^*, \mathcal{O}(1)\otimes \wedge^2 \mathcal{U}^*, \Sigma^{(2,1)}\mathcal{U}^*\otimes \wedge^2 \mathcal{U}^*, \mathcal{O}(2)\otimes \wedge^2 \mathcal{U}^* \right). 
\end{equation}
Denote the objects of \eqref{twistedcollection} by $E_k$, $ 1\leq k \leq 4$. The following proposition can be easily verified.   
\begin{proposition}
    The twisted collection \eqref{twistedcollection} is a full exceptional collection. 
\end{proposition}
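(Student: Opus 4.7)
The plan is to reduce the claim to the standard fact that tensoring with an invertible sheaf is an autoequivalence of the bounded derived category, and that any autoequivalence of $\mathcal{D}^b(LG(2,4))$ sends full exceptional collections to full exceptional collections. Since the tautological subbundle $\mathcal{U}$ on $LG(2,4)$ has rank $2$, the determinant line bundle $L := \wedge^2 \mathcal{U}^*$ is invertible, and the functor $F := - \otimes L \colon \mathcal{D}^b(LG(2,4)) \to \mathcal{D}^b(LG(2,4))$ is an exact equivalence of triangulated categories with quasi-inverse $- \otimes L^{-1}$.

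First I would check the exceptionality axioms for the twisted collection $(E_1, E_2, E_3, E_4)$. For any pair of objects $A, B$ in $\mathcal{D}^b(LG(2,4))$ and any invertible sheaf $L$, the projection formula (applied twice) gives a natural isomorphism $\mathrm{Hom}^*(A \otimes L, B \otimes L) \cong \mathrm{Hom}^*(A, B)$. Hence both defining axioms of an exceptional collection, namely that $\mathrm{Hom}^*(E_k, E_k)$ is one-dimensional and generated by the identity morphism, and that $\mathrm{Hom}^*(E_j, E_k) = 0$ for $j > k$, transfer directly from the Fonarev collection \eqref{Fonarevcollection}, established in \cite{fonarev2022FECLagrangian}, to the twisted collection \eqref{twistedcollection}.

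Next I would verify fullness. Let $\mathcal{T} \subset \mathcal{D}^b(LG(2,4))$ be a triangulated subcategory containing the objects $E_1, \dots, E_4$. Applying the quasi-inverse functor $- \otimes L^{-1}$, which takes triangulated subcategories to triangulated subcategories, produces a triangulated subcategory $\mathcal{T}'$ containing each of the four objects of the Fonarev collection. By fullness of that collection, $\mathcal{T}' = \mathcal{D}^b(LG(2,4))$, and applying $F$ back gives $\mathcal{T} = \mathcal{D}^b(LG(2,4))$.

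No genuine obstacle arises here: the only thing to check beyond quoting Fonarev's theorem is that $\wedge^2 \mathcal{U}^*$ is invertible, which is immediate from $\mathrm{rk}(\mathcal{U}) = 2$, and that $- \otimes L$ respects shifts and distinguished triangles, which is standard. Thus the entire argument is a short application of the autoequivalence principle to the known full exceptional collection \eqref{Fonarevcollection}.
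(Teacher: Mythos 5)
Your proof is correct, and it is the standard argument: the paper itself gives no proof, stating only that the proposition ``can be easily verified,'' and the verification you describe --- that $-\otimes\wedge^2\mathcal{U}^*$ is an autoequivalence of $\mathcal{D}^b(LG(2,4))$, hence carries Fonarev's full exceptional collection to a full exceptional collection --- is precisely the intended elementary check.
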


\noindent The Euler matrix $\tilde{S} = (\chi(E_j,E_k))_{1\leq j,k \leq n}$ can be computed:   
\begin{equation}
   \tilde{S}= \left(
\begin{array}{cccc}
 1 & 5 & 16 & 14 \\
 0 & 1 & 4 & 5 \\
 0 & 0 & 1 & 4 \\
 0 & 0 & 0 & 1 \\
\end{array}
\right).
\end{equation}
The Gamma class $\hat{\Gamma}^{-}$ of $LG(2,4)$ is: 
\begin{equation}
 \hat{\Gamma}^{-}=1+3 \gamma  \sigma_1+\frac{1}{6} \left(54 \gamma ^2+\pi ^2\right) \sigma_2+\frac{1}{2}  \left(-4 \zeta (3)+18 \gamma ^3+\gamma  \pi ^2\right)\sigma_{2,1}. 
\end{equation}
The graded Chern characters are
\begin{equation}
\begin{aligned}
    \text{Ch}(\mathcal{O})&=1,
 \\ \text{Ch}(\mathcal{O}(1))&=1+2 i \pi  \sigma_1-4 \pi ^2 \sigma_2-\dfrac{8}{3}  i \pi ^3 \sigma_{2,1},
 \\ \text{Ch}(\Sigma^{(2,1)}\mathcal{U}^*)&=2+6 i \pi  \sigma_1-16 \pi ^2 \sigma_2-32 i \pi ^3 \sigma_{2,1},
 \\ \text{Ch}(\mathcal{O}(2))&=1+4 i \pi  \sigma_1-16 \pi ^2 \sigma_2-\dfrac{64}{3}  i \pi ^3 \sigma_{2,1}, 
 \\ \text{Ch}(\wedge^2 \mathcal{U}^*)&=1+2 i \pi  \sigma_1-4 \pi ^2 \sigma_2-\dfrac{8}{3}  i \pi ^3 \sigma_{2,1}. 
\end{aligned}
\end{equation}
Then we can compute the matrix  
\begin{equation}
\begin{aligned}
    C_{\Gamma}&= \left(
\begin{array}{cc}
 \frac{i}{2 \sqrt{2} \pi ^{3/2}} & \frac{i}{2 \sqrt{2} \pi ^{3/2}}  \\
 \frac{\pi +3 i \gamma }{2 \sqrt{2} \pi ^{3/2}} & -\frac{\pi -3 i \gamma }{2 \sqrt{2} \pi ^{3/2}}  \\
 \frac{54 i \gamma ^2+36 \gamma  \pi -5 i \pi ^2}{12 \sqrt{2} \pi ^{3/2}} & \frac{i \left(54 \gamma ^2+36 i \gamma  \pi -5 \pi ^2\right)}{12 \sqrt{2} \pi ^{3/2}}  \\
 -\frac{12 i \zeta (3)-54 i \gamma ^3-54 \gamma ^2 \pi +15 i \gamma  \pi ^2+\pi ^3}{12 \sqrt{2} \pi ^{3/2}} & \frac{-12 i \zeta (3)+54 i \gamma ^3-54 \gamma ^2 \pi -15 i \gamma  \pi ^2+\pi ^3}{12 \sqrt{2} \pi ^{3/2}}  \\
\end{array}
\right.
\\ &\quad \quad \quad \left.
\begin{array}{cc}
 \frac{i}{\sqrt{2} \pi ^{3/2}} & \frac{i}{2 \sqrt{2} \pi ^{3/2}} \\
\frac{-2 \pi +3 i \gamma }{\sqrt{2} \pi ^{3/2}} & -\frac{3 (\pi -i \gamma )}{2 \sqrt{2} \pi ^{3/2}} \\
 \frac{i \left(54 \gamma ^2+72 i \gamma  \pi -17 \pi ^2\right)}{6 \sqrt{2} \pi ^{3/2}} & \frac{i \left(54 \gamma ^2+108 i \gamma  \pi -53 \pi ^2\right)}{12 \sqrt{2} \pi ^{3/2}} \\
 \frac{2 \left(\pi ^3-6 i \zeta (3)\right)+54 i \gamma ^3-108 \gamma ^2 \pi -51 i \gamma  \pi ^2}{6 \sqrt{2} \pi ^{3/2}} & \frac{-4 i \zeta (3)+18 i \gamma ^3-54 \gamma ^2 \pi -53 i \gamma  \pi ^2+17 \pi ^3}{4 \sqrt{2} \pi ^{3/2}} \\
\end{array}
\right),
\end{aligned}
\end{equation}
whose columns are given by coordinates of
\begin{equation}
  \frac{i}{(2\pi)^{\frac{3}{2}}} \hat{\Gamma}^{-} \cup  e^{-i\pi c_1} \cup \text{Ch}(E_k),\quad 1\leq k \leq 4, 
\end{equation}
with respect to the basis $\sigma_0,  \sigma_1,  \sigma_2,  \sigma_{2,1}$.

\vspace{10pt}

\begin{theorem}\label{maintheorem}
    Conjecture \ref{refinedconjecture} holds for the Lagrangian Grassmanian $LG(2,4)$.
\end{theorem}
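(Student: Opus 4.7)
The plan is to use the explicit data computed in the preceding subsections, namely the Stokes matrix $S$, the central connection matrix $C$, the Euler matrix $\tilde S$, and the Gamma-class matrix $C_\Gamma$, to verify all three parts of Conjecture \ref{refinedconjecture} for $LG(2,4)$.

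Part 1 is obtained by combining the semisimplicity of the small quantum cohomology at $q=1$ (Perrin \cite{Perrin2010quantumminusculeIII}) with Fonarev's full exceptional collection \eqref{Fonarevcollection} on $\mathcal{D}^b(LG(2,4))$ \cite{fonarev2022FECLagrangian}. The twisted collection \eqref{twistedcollection} is also a full exceptional collection, since tensoring by the invertible object $\wedge^2 \mathcal U^*$ preserves both the $\mathrm{Hom}$-vanishing axiom and the fullness of the generated triangulated subcategory.

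For parts 2 and 3, the strategy is to compare the pair $(S,C)$ with $(\tilde S^{-1}, C_\Gamma)$. By direct inspection these pairs are not equal as written, so I plan to exhibit an explicit braid $\beta\in\mathcal B_4$ (possibly combined with an overall sign change on the exceptional objects) such that $\beta(S)=\tilde S^{-1}$ and $\beta(C)=C_\Gamma$. This suffices because the braid action on monodromy data recalled in Section \ref{sec:2} corresponds to mutations of the exceptional collection, while the refined Dubrovin conjecture only asserts the \emph{existence} of some full exceptional collection for which the equalities hold. The useful observation guiding the search is that the first and fourth columns of $C$ already coincide with the corresponding columns of $C_\Gamma$, suggesting that the relevant braid fixes positions $1$ and $4$ and hence lies in the subgroup generated by $\beta_{2,3}$.

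The main obstacle is identifying this braid element (together with any accompanying sign flip) explicitly; once it is pinned down, the verification of $\beta(S)=\tilde S^{-1}$ and $\beta(C)=C_\Gamma$ reduces to finite matrix algebra using the entries of $S$, $C$, $\tilde S$, and $C_\Gamma$ already displayed, completing the proof.
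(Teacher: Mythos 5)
Your strategy matches the paper's proof exactly: after establishing part~1 via Perrin's semisimplicity and Fonarev's collection (twisted by $\wedge^2\mathcal U^*$), the paper exhibits the braid $\beta_{23}^{-1}$ followed by the sign change $\mathrm{diag}(1,-1,-1,1)$ as the transformation carrying $(S,C)$ to $(\tilde S^{-1},C_\Gamma)$. Your observation that columns $1$ and $4$ of $C$ already agree with those of $C_\Gamma$, so the braid should lie in $\langle\beta_{2,3}\rangle$ and the sign flip should act only on positions $2,3$, is precisely the heuristic that singles out the paper's answer; carrying out the finite matrix check you describe completes the argument.
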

\begin{proof}
    Note that we have proved part $1$ of Conjecture \ref{originalconjecture} for $LG(2,4)$. Now consider the action of braid group $\mathcal{B}_4$, which is generated by 3 elementary braids $\beta_{12}, \beta_{23}, \beta_{34}$. The following sequence transforms $S$ into $\tilde{S}^{-1}$ and $C$ into $C_{\Gamma}$:
\begin{enumerate}
    \item The action by $\beta^{-1}_{23}$, 
    \item Change of signs by $\text{diag}(1,-1,-1,1)$.
\end{enumerate}    
    Therefore, we have proved part $2$ and $3$ of Conjecture \ref{originalconjecture} for $LG(2,4)$.  
\end{proof}
    
\smallskip
\noindent {\bf Acknowledgements}.
This work was done during my master thesis project. I would like to thank Di Yang for his advice, helpful discussions and encouragements. The work was supported by CAS No. YSBR-032, by National Key R and D Program of China 2020YFA0713100, and by NSFC No. 12371254.

\printbibliography

\end{document}